\newcommand{\R}{{\mathbb R}}
\newcommand{\Z}{{\mathbb Z}}
\newcommand{\cR}{{\mathcal R}}
\newcommand{\cC}{{\mathcal C}}
\newcommand{\Ex}{{\mathrm{exp}}}
\newcommand{\e}{\varepsilon}
\newcommand{\supp}{\operatorname{supp}}
\newtheorem{theorem}{Theorem}[section]
\newtheorem{claim}[theorem]{Claim}
\newtheorem{cor}[theorem]{Corollary}
\newtheorem{definition}{Definition}[section]
\newtheorem{example}[theorem]{Example}
\newtheorem{lem}[theorem]{Lemma}
\newtheorem{remark}[theorem]{Remark}
\numberwithin{equation}{section}
\begin{document}
\title[Homogenization of Dirichlet problem, with applications]{Applications of Fourier analysis in homogenization of Dirichlet problem II. \\ $L^p$ estimates}

\author{Hayk Aleksanyan}

\address{School of Mathematics, The University of Edinburgh, JCMB The King's Buildings, Mayfield Road, Edinburgh EH9 3JZ, UK}
\email{H.Aleksanyan@sms.ed.ac.uk}
\thanks{H. Aleksanyan thanks G\"{o}ran Gustafsson Foundation for visiting appointment to KTH}

\author[Henrik Shahgholian ]{Henrik Shahgholian}
\address{Department of Mathematics, KTH Royal Institute of Technology,
  100~44  Stockholm, Sweden}
\email{henriksh@kth.se}
\thanks{H. Shahgholian was partially supported by Swedish Research Council}

\author{Per Sj\"{o}lin}
\address{Department of Mathematics, KTH Royal Institute of Technology,
  100~44  Stockholm, Sweden}
\email{persj@math.kth.se}

\subjclass{Primary 35B27; Secondary 42B20}

\begin{abstract}
Let $u_\e$ be a solution to  the system
$$
 \mathrm{div}(A_\e(x)  \nabla u_{\e}(x))=0 \text{ \  in } D, \qquad
u_{\e}(x)=g(x,x/\e) \text{  \ on }\partial D,
$$
where $D \subset \R^d $ ($d \geq 2$), is a smooth uniformly  convex domain, and $g$ is $1$-periodic in its second variable, and both $A_\e$ and $g$ reasonably smooth.

Our results in this paper are two folds. First we prove $L^p$ convergence results for solutions of the above system, for non-oscillating operator, $A_\e(x) =A(x)$, with the following convergence rate for all $1\leq p <\infty$
 $$
 \|u_\e - u_0\|_{L^p(D)} \leq C_p  \begin{cases} \e^{1/2p} ,&\text{  $d=2$}, \\
 (\e  |\ln \e |)^{1/p}, &\text{  $d = 3$ }, \\ \e^{1/p} ,&\text{  $d \geq 4$,}    \end{cases}
 $$
 which we prove is (generically) sharp for $d\geq 4$. Here $u_0$ is the solution to the averaging problem.

 Second,  combining our method with the recent results due to Kenig, Lin and Shen
\cite{KLS1}, we prove (for certain class of operators and when $d\geq 3$ )
$$
|| u_\e - u_0 ||_{L^p(D)} \leq C_p [ \e (\ln(1/ \e))^2 ]^{1/p}.
$$
for both  oscillating operator and boundary data. For this case, we  take $A_\e=A(x/\e)$, where $A$ is $1$-periodic as well.

Some further applications of the method to the homogenization of Neumann problem with
oscillating boundary data are also considered.
\end{abstract}

\keywords{Homogenization, boundary layer, Elliptic systems, Dirichlet problem, Neumann problem, Oscillatory Integrals}

\maketitle

\section{Introduction and main result}

In this paper we continue our study, initiated in \cite{ASS}, of asymptotic behavior of
solutions to elliptic systems in divergence form
\begin{equation}\label{problem-fixed-oper}
 -\mathrm{div}(A(x) \nabla u (x)) =0 , \qquad x\in D,
\end{equation}
set in a bounded domain $D\subset \R^d$ ($d\geq 2$) and with oscillating Dirichlet data
\begin{equation}\label{prob-Dir-data-osc}
 u(x)=g \left(x, \frac{x}{\e} \right), \qquad x \in \partial D.
\end{equation}

\noindent As usual $\e>0$ is a small parameter,
$A(x)=(A_{ij}^{ \alpha \beta }(x))$ is $\R^{N^2 \times d^2}$-valued
function defined on $\R^d$, where $1\leq \alpha, \beta \leq d$, $1\leq i,j\leq N$, and $g(x,y)$ is $\R^N$-valued function
defined on $\partial D \times \R^d$.
Using the summation convention of repeated indices the operator in (\ref{problem-fixed-oper}) is defined as\footnote[1]{If not
stated otherwise, througout the text we will use this convention for repeated indices.}
\begin{equation}\label{def-operator-fixed}
-(\mathcal{L}u)_i:=\left(\mathrm{div}(A(\cdot) \nabla u ) \right)_i (x) =  \frac{\partial}{\partial x^{\alpha}} \left[ A^{\alpha \beta }_{ij } ( \cdot )
\frac{\partial u_j}{\partial x^{\beta}}   \right] (x),
\end{equation}
where $u=(u_1,...,u_N)$ and $1\leq i \leq N$. In a similar way we define a family of operators with
rapidly oscillating coefficients, namely for each $\e>0$ we set
\begin{equation}\label{def-operator-osc}
-(\mathcal{L}_\e u)_i:=\left[ \mathrm{div} \left(A \left(\frac{\cdot}{\e} \right) \nabla u \right) \right]_i (x) =
 \frac{\partial}{\partial x^{\alpha}} \left[ A^{\alpha \beta }_{ij } \left( \frac{\cdot }{\e} \right)
\frac{\partial u_j}{\partial x^{\beta}}   \right] (x).
\end{equation}

For the family of operators $\{ \mathcal{L}_\e \}_{\e>0}$
we set $\mathcal{L}_0$ to be the homogenized (effective) operator in a usual sense of the theory of homogenization (see \cite{BLP}).

\subsection{Assumptions}\label{sec-Assump} We will study  problem (\ref{problem-fixed-oper})-(\ref{prob-Dir-data-osc}) under the following hypotheses.
\begin{itemize}
\item[i]  (Periodicity) The boundary vector-valued function $g$ is 1-periodic in its second variable, i.e.
$$
g(x,y+h)=g(x,y), \qquad  \forall x \in \partial D, \ \forall y\in \R^d, \ \forall h\in \Z^d.
$$

\noindent When dealing with operator $\mathcal{L}_\e$ defined in (\ref{def-operator-osc}) we assume that the matrix $A$ is 1-periodic, i.e.
$$
A(x+h)=A(x), \qquad \forall x \in \R^d, \ \forall h\in \Z^d.
$$
\item[ii] (Ellipticity) There exists a constant $c>0$ such that
$$
  c \xi_{\alpha}^i \xi_{\alpha}^i \leq A_{ij}^{\alpha \beta}(x) \xi_{\alpha}^i \xi_{\beta}^j \leq c^{-1} \xi_{\alpha}^i
\xi_{\alpha}^i, \qquad \forall x\in \R^d, \ \forall \xi \in \R^{d\times N}.
$$
\item[iii] (Convexity) We assume that $\partial D$ is a uniformly convex hypersurface, that is all its principal curvatures are bounded away from 0.
\item[iv] (Smoothness) We suppose that the boundary value $g$ in both variables,
 the all elements of $A$, and domain $D$ are sufficiently smooth\footnote[2]{Here we do not aim
to obtain the optimal smoothness, but rather focus on the method itself.}.

\end{itemize}

For each $\e>0$ let $u_\e$ be the solution to Dirichlet problem
(\ref{problem-fixed-oper})-(\ref{prob-Dir-data-osc}), and
let $u_0$ be the solution to the system (\ref{problem-fixed-oper}) with
Dirichlet data
\begin{equation}\label{avg-Dir-data}
u_0(x)=\overline{g}(x), \qquad x \in \partial D,
\end{equation}
where $\overline{g}(x)=\int_{ \mathbb{T}^d} g(x,y) dy$, and $\mathbb{T}^d$ is the
unit torus of $\R^d$. In \cite{ASS} the current authors proved that for each $\kappa>d-1$ there exists a constant $C_\kappa$
so that
\begin{equation}\label{pointwise-est}
 |u_\e(x)- u_0(x)| \leq C_\kappa \frac{ \e^{(d-1)/2} }{d^{\kappa}(x)}, \qquad \forall x\in D,
\end{equation}
where $d(x)$ is the distance of $x$ from the boundary of $D$.
From our pointwise bound (\ref{pointwise-est}) one could easily obtain $L^p$ convergence
of $u_\e$ to $u_0$ inside the domain $D$ with the rate of convergence $\e^{1/2p}$ for all
$1\leq p < \infty$ and in all dimensions starting from two. Nevertheless, the results in
\cite{ASS} not being optimal, raise naturally the question of finding the
optimal rate for $L^p$ convergence.
Some remarks are in order.

\begin{remark}
The pointwise convergence result in \cite{ASS} is stated for the case when the boundary data $g$ only
depends on its periodic variable.
Generalization to the current setting of two variables, given the smoothness of the
boundary data, is straightforward and follows the similar analysis as in \cite{ASS}.
\end{remark}

\begin{remark}
The existence of effective limit $u_0$ for solutions $u_\e$ follows from \cite{LS}.
However the methods of \cite{LS} do not provide any estimates on the rate of convergence.
\end{remark}

\begin{remark}
In a recent work by D. G\'{e}rard-Varet and N. Masmoudi \cite{GM} the authors consider
the following problem
\begin{equation}\label{problem-form-GM}
\mathcal{L}_\e u_\e(x) =0 , \ x \in D \qquad \text{ and } \qquad u_\e(x) = g \left( x, \frac{x}{\e} \right), \ x \in \partial D,
\end{equation}
where $\mathcal{L}_\e$ is defined as in (\ref{def-operator-osc}). The main result of \cite{GM} states that
under the assumptions (i)-(iv) there exists a fixed boundary data $g^*$ so that if $u_0$ is the solution
to the problem
$$
\mathcal{L}_0 u_0 (x) = 0 , \ x \in D \qquad{ and } \qquad u_0(x)= g^*(x), \ x \in \partial D,
$$
then
\begin{equation}\label{GM-L2-convergence}
|| u_\e - u_0 ||_{L^2(D)} \leq C_\alpha \e^\alpha, \qquad \forall \alpha \in \left(0, \frac{d-1}{3d+5} \right).
\end{equation}
Now observe that for an operator with constant coefficients the setting of \cite{GM}, \cite{ASS}, and the current paper
become identical. Then, by the $L^p$ convergence result of \cite{ASS} we may replace the exponent $\alpha$
in (\ref{GM-L2-convergence}) by 1/4 in all dimensions $d\geq 2$. This gives improvement up to dimensions eight including,
 while for $d\geq 10$ the convergence rate in (\ref{GM-L2-convergence}) is better. Another motivation
for this paper was to investigate the optimal convergence rate for the result in \cite{GM}.
In particular we will show that for some class of operators
one can achieve a better convergence rate than that in (\ref{GM-L2-convergence}) (see Theorem \ref{Thm-L-p-oscillating} below).
\end{remark}

\begin{remark}
  Due to the classical work \cite{AL-systems}, by M. Avellaneda and F.-H. Lin,
the non-oscillating boundary data case of  (\ref{problem-form-GM})
 is well understood.
\end{remark}

In this paper we shall strengthen our results on $L^p$ convergence rate in \cite{ASS}.
Our technique uses Fourier analysis methods and depends heavily on the regularity of the operator, and the boundary data, as well as on the regularity, and uniform convexity of the domain.
Although the method is straightforward and computational analysis, it uses refined and  technical (classical) stationary phase analysis, along with estimates of the Poisson kernel.

In Theorem \ref{Thm-L-p-oscillating} we describe a possible setting when our methods can be combined directly with some of the recent results to deal with the problem of homogenization of elliptic systems with rapidly oscillating coefficients and boundary data considered in \cite{GM}.
In section \ref{sec-Neumann} we show that the method presented here can be applied
to study the homogenization of Neumann problem with fixed operator and oscillating boundary data.

The main results of this paper are the following.

\begin{theorem}\label{Thm-L-p}($L^p$-convergence) Let $u_\e$ be the solution to the problem
(\ref{problem-fixed-oper})-(\ref{prob-Dir-data-osc})
and $u_0$ to that of (\ref{problem-fixed-oper}) and (\ref{avg-Dir-data}) under assumptions (i)-(iv).
Then, for all $1\leq p <\infty$ one has
$$
\|u_\e -u_0 \|_{L^p (D)} \leq C_p \begin{cases} \e^{1/2p} ,&\text{  $d=2$}, \\
(\e  |\ln \e |)^{1/p}, &\text{  $d = 3$ }, \\ \e^{1/p} ,&\text{  $d \geq 4$.}    \end{cases}
$$
\end{theorem}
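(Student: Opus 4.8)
The plan is to pass through an $L^{1}$ bound and then interpolate. Write $w_\e:=u_\e-u_0$. Since the operator $\mathcal L=\mathrm{div}(A(x)\nabla\,\cdot\,)$ is the same for $u_\e$ and for $u_0$, the difference solves the homogeneous system $\mathcal L w_\e=0$ in $D$ with boundary datum $f_\e(y):=g(y,y/\e)-\overline g(y)$ on $\partial D$. The classical $L^\infty$ bound for smooth elliptic systems gives $\|w_\e\|_{L^\infty(D)}\le C\|f_\e\|_{L^\infty(\partial D)}\le C$ with $C$ independent of $\e$, and the elementary inequality $\|w_\e\|_{L^p(D)}\le\|w_\e\|_{L^\infty(D)}^{1-1/p}\|w_\e\|_{L^1(D)}^{1/p}$ then reduces the theorem to the single estimate
\[
\|w_\e\|_{L^1(D)}\ \le\ C\,R_d(\e),\qquad R_2(\e)=\e^{1/2},\quad R_3(\e)=\e|\ln\e|,\quad R_d(\e)=\e\ \ (d\ge4),
\]
since the asserted rates are precisely $R_d(\e)^{1/p}$ — the $1/p$--dependence being the signal that one should argue through $L^1$ and $L^\infty$. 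Merely integrating the pointwise bound (\ref{pointwise-est}) against the trivial bound $|w_\e|\le C$ yields only $\|w_\e\|_{L^1(D)}\lesssim\e^{1/2-\eta}$, with no dependence on $d$, so a sharper analysis of $w_\e$ near $\partial D$ is unavoidable.

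For the $L^1$ bound I would represent $w_\e$ through the Poisson kernel $P$ of $\mathcal L$ on $D$, $w_\e(x)=\int_{\partial D}P(x,y)f_\e(y)\,d\sigma(y)$, and expand the datum in its fast variable,
\[
f_\e(y)=\sum_{\xi\in\Z^d\setminus\{0\}}a_\xi(y)\,e^{2\pi i\,\xi\cdot y/\e},\qquad \|a_\xi\|_{C^k(\partial D)}\le C_{k,M}\,|\xi|^{-M}\quad(\forall k,M),
\]
the rapid decay of the coefficients coming from $1$--periodicity and the smoothness of $g$. Then $w_\e=\sum_\xi W_\xi$ with $W_\xi(x)=\int_{\partial D}P(x,y)a_\xi(y)e^{2\pi i\xi\cdot y/\e}d\sigma(y)$, and it is enough to prove $\|W_\xi\|_{L^1(D)}\le C|\xi|^{-d-1}R_d(\e)$ and sum. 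Here uniform convexity enters: the Gauss map $\nu:\partial D\to\sS^{d-1}$ being a diffeomorphism, for each $\xi$ the phase $y\mapsto\xi\cdot y$ restricted to $\partial D$ has exactly two critical points $y_\xi^{\pm}$ (those $y$ with $\nu(y)=\pm\xi/|\xi|$), both nondegenerate with Hessian comparable to $|\xi|$ times the second fundamental form. I would split $D$ into the set $D_\xi^{\mathrm{far}}$ of points whose nearest boundary point lies at distance $\ge\rho_0$ from both $y_\xi^{\pm}$, and the complementary tubes $D_\xi^{\mathrm{near}}$ around the two inner normal rays through $y_\xi^{\pm}$.

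On $D_\xi^{\mathrm{far}}$ the tangential gradient of the phase is bounded below by $c|\xi|$, so repeated integration by parts in $y$, using the standard Poisson-kernel bounds $|\nabla_y^kP(x,y)|\lesssim d(x)^{1-d-k}$ for $|x-y|\lesssim d(x)$ and $\lesssim d(x)|x-y|^{-d-k}$ otherwise, gives $|W_\xi(x)|\le C_N(\e/(|\xi|d(x)))^N$ for every $N$, hence $\int_{D_\xi^{\mathrm{far}}}|W_\xi|\,dx\lesssim|\xi|^{-1}\e$. The delicate part is $D_\xi^{\mathrm{near}}$. Near $y_\xi^{+}$ I would use boundary-flattening coordinates, $\partial D=\{(z,\phi(z))\}$ with $\phi(z)=\tfrac12 z^{T}Bz+O(|z|^{3})$, $B$ comparable to the identity, so that the phase becomes $\tfrac{|\xi|}{\e}\phi(z)+O(\cdot)$ with vanishing linear part at the critical point, while for $x$ at depth $t=d(x)$ over the boundary point of parameter $z_0$ (so that $|z_0|\approx\rho$, the distance of $x$'s projection to the critical normal ray) the Poisson kernel looks like $P(x,\cdot)\approx t\,(|z-z_0|^2+t^2)^{-d/2}$. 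Rescaling $z=z_0+tv$ turns $W_\xi(x)$ into the model integral $\int_{\R^{d-1}}(|v|^2+1)^{-d/2}e^{\,i\alpha\,v^{T}Bv+i\beta\,b\cdot v}\,dv$ with $\alpha\approx|\xi|t^{2}/\e$ and $\beta\approx|\xi|t\rho/\e$; classical stationary phase, distinguishing $\alpha\lesssim1$ from $\alpha\gg1$ and $\beta\lesssim\alpha$ from $\beta\gg\alpha$, gives
\[
|W_\xi(x)|\ \lesssim\ |a_\xi(y_\xi^{+})|\cdot
\begin{cases}
1,& t^{2}\lesssim\e/|\xi|,\ \ t\rho\lesssim\e/|\xi|,\\
e^{-c|\xi|t\rho/\e},& t^{2}\lesssim\e/|\xi|,\ \ t\rho\gtrsim\e/|\xi|,\\
\bigl(\e/(|\xi|t^{2})\bigr)^{(d-1)/2},& t^{2}\gtrsim\e/|\xi|,\ \ \rho\lesssim t,\\
(\e/|\xi|)^{(d-1)/2}\,t\,\rho^{-d},& t^{2}\gtrsim\e/|\xi|,\ \ \rho\gtrsim t.
\end{cases}
\]

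Finally I would integrate these bounds. In the tube $dx\approx dt\,\rho^{d-2}d\rho\,d\omega$, and integrating first in $\rho$ gives $F(t):=\int_{0}^{\rho_{0}}|W_\xi(t,\rho)|\rho^{d-2}d\rho\lesssim|a_\xi(y_\xi^{+})|\,(\e/(|\xi|t))^{d-1}$ for $t^{2}\lesssim\e/|\xi|$ and $\lesssim|a_\xi(y_\xi^{+})|\,(\e/|\xi|)^{(d-1)/2}$ for $t^{2}\gtrsim\e/|\xi|$; handling the layer $\{d(x)\le\e\}$ by its measure, $\int_{D_\xi^{\mathrm{near}}}|W_\xi|\lesssim|a_\xi(y_\xi^{+})|\bigl(\e+\int_{\e}^{\operatorname{diam}D}F(t)\,dt\bigr)$. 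The resulting elementary $t$--integrals are exactly what produce the dimensional split: they sum to $C\,\e^{1/2}$ when $d=2$, to $C\,\e|\ln\e|$ when $d=3$ — the critical case, where a borderline, logarithmically divergent integral appears — and to $C\,\e$ when $d\ge4$; together with $|\xi|^{-1}\e$ from $D_\xi^{\mathrm{far}}$ and after absorbing the rapidly decaying factor $|a_\xi(y_\xi^{\pm})|$ this yields $\|W_\xi\|_{L^1(D)}\lesssim|\xi|^{-d-1}R_d(\e)$, and summation over $\xi$ completes the $L^1$ estimate. The hard part is the analysis on $D_\xi^{\mathrm{near}}$: the amplitude (the Poisson kernel) concentrates at scale $d(x)$, the boundary datum oscillates at scale $\e$, and the phase degenerates along the critical normal ray at distance $\rho$, so all three scales and their crossovers must be carried simultaneously through the stationary-phase analysis — this, together with the requisite pointwise and derivative estimates for the Poisson kernel of a general smooth system, is where essentially all the work lies.
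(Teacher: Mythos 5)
Your overall architecture is the same as the paper's: reduce $L^p$ to $L^1$ through the uniform $L^\infty$ bound coming from $\int_\Gamma|P(x,y)|\,d\sigma(y)\le C$, represent $u_\e-u_0$ by the Poisson kernel, expand the datum in Fourier modes in the fast variable, treat each mode by stationary/non-stationary phase using derivative bounds for $P$, and absorb the layer $\{d(x)<\e\}$ by its measure. The paper organizes the oscillatory-integral step differently (partition of unity on $\Gamma$, a split according to the size of the tangential part $n'$ of the frequency direction, a cutoff at scale $\e^{1/2}$ about the critical point and two integrations by parts with conical cutoffs, then integration of the resulting kernels over $D_\e$), whereas you derive pointwise-in-$x$ bounds for each mode and integrate over tubes. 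That reorganization is legitimate, but note it needs kernel bounds of the form $|D_y^\alpha P(x,y)|\le C_\alpha\, d(x)\,|x-y|^{-d-|\alpha|}$, i.e.\ derivative estimates retaining the $d(x)$ factor, which are stronger than Lemma \ref{Lem-Poisson-est} though provable by the same scaling/boundary-regularity arguments; also, your four case bounds actually integrate to $C\e$ when $d=3$, not $C\e|\ln\e|$ as you assert --- harmless, since it is stronger than the claimed rate.

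The genuinely flawed step is the far-region estimate. For $x\in D_\xi^{\mathrm{far}}$ the boundary integral defining $W_\xi(x)$ still runs over all of $\partial D$, including neighborhoods of the critical points $y_\xi^{\pm}$, where the tangential gradient of $y\mapsto\xi\cdot y$ vanishes; moving $x$ only relocates the concentration of the kernel, not the stationary points of the phase. Hence you cannot integrate by parts over the whole boundary, and the claimed bound $|W_\xi(x)|\le C_N\bigl(\e/(|\xi|d(x))\bigr)^N$ is false: in the paper's two-dimensional example ($g(y)=\Ex(y_2)$ on the unit disc) one has $u_\e(x)=C\e^{1/2}\bigl[P(x,n_+)e^{2\pi i/\e}+P(x,n_-)e^{-2\pi i/\e}\bigr]+O(\e^{3/2})$ uniformly for $|x|<1/2$, so the single nonzero mode has size $\sim\e^{1/2}$ at points with $d(x)\sim 1/2$ whose nearest boundary point is far from the poles, not $O(\e^N)$. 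The repair is straightforward but requires stationary phase again rather than pure non-stationary decay: split the boundary integral (not merely $D$) by a partition of unity; away from $y_\xi^\pm$ integrate by parts as you do, while near $y_\xi^\pm$ use that $|x-y|\ge c\rho_0$, so the amplitude $P(x,\cdot)a_\xi$ and its $y$-derivatives are $O(d(x))$ there, and stationary phase gives a contribution $O\bigl(d(x)(\e/|\xi|)^{(d-1)/2}\bigr)$, whose integral over $D$ is $O\bigl((\e/|\xi|)^{(d-1)/2}\bigr)$ --- admissible in every dimension ($\e^{1/2}$ for $d=2$, at most $\e$ for $d\ge3$). With this correction (and with the asserted exponential factor $e^{-c|\xi|t\rho/\e}$ replaced by the rapid polynomial decay that integration by parts actually yields, which suffices), your argument goes through and recovers the stated rates.
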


Next, we consider the question of optimality of the $L^p$ convergence rate provided by Theorem \ref{Thm-L-p}. In particular we
prove that in dimensions greater than 3 the convergence rate obtained in Theorem \ref{Thm-L-p} is sharp. For simplicity we will
consider the case of simple equations rather than systems, and will assume that the boundary value $g$ depends only on its oscillating
variable, that is $g:\mathbb{T}^d \rightarrow \mathbb{C}$, and $\overline{g}=\int\limits_{\mathbb{T}^d} g(y) dy$.

\begin{theorem}\label{Thm-Optimality}(Optimality)
Let $N=1$, and $u_\e$ be the solution to (\ref{problem-fixed-oper})-(\ref{prob-Dir-data-osc}) and $u_0$ to that of (\ref{problem-fixed-oper})
and (\ref{avg-Dir-data}) under assumptions (i)-(iv).
Then for each $1\leq p<\infty$ there exists a constant $C_p$ independent of $\e$, such that
$$
\|u_\e -u_0 \|_{L^p(D)} \geq C_p \e^{1/p} \|g -\overline{g}\|_{L^{\infty}( \mathbb{T}^d )}.
$$
\end{theorem}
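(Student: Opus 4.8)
The plan is to establish a lower bound on $\|u_\e - u_0\|_{L^p(D)}$ by finding a specific region inside $D$, of measure comparable to $\e$, on which $|u_\e - u_0|$ is bounded below by a fixed constant times $\|g-\overline g\|_{L^\infty(\mathbb T^d)}$. Since $N=1$ we may subtract: writing $w_\e := u_\e - u_0$, the function $w_\e$ solves $\mathrm{div}(A(x)\nabla w_\e)=0$ in $D$ with boundary data $w_\e(x) = g(x/\e) - \overline g$ on $\partial D$. The key point is that $g - \overline g$ has zero mean over the torus but is not identically zero, so $\|g-\overline g\|_{L^\infty}>0$; I want to show this oscillation survives, in an $L^p$-averaged sense, a distance $\sim\e$ into the domain.

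First I would localize near a boundary point $x_0 \in \partial D$ where $|g(x_0/\e)-\overline g|$ is comparable to $\|g-\overline g\|_{L^\infty}$ — more carefully, by a mean-value / continuity argument one can arrange, for each small $\e$, a boundary patch $\Gamma_\e \subset \partial D$ of surface measure bounded below by a constant, on which $g(\cdot/\e) - \overline g$ has a definite sign and magnitude $\gtrsim \|g-\overline g\|_{L^\infty}$ on a positive fraction; here periodicity of $g$ is what guarantees such patches exist uniformly in $\e$ (the rescaled function $y\mapsto g(y)$ attains values near its sup on a set of torus-measure bounded below, and pulling back along $x\mapsto x/\e$ distributes these over $\partial D$ at scale $\e$). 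Then I would use the maximum principle together with a barrier: the solution $w_\e$ with boundary data of one sign and size $\gtrsim m:=\|g-\overline g\|_{L^\infty}$ on $\Gamma_\e$ dominates (up to the oscillation of the rest of the data, which is $O(m)$ but with the wrong sign) a Poisson-type quantity. More robustly, I would invoke the $L^\infty$–$L^p$ boundary estimate for the Poisson kernel $P(x,y)$ of $\mathcal L$ on the smooth domain $D$: one has the two-sided bound $P(x,y)\simeq d(x)/|x-y|^d$ for $x$ near $y\in\partial D$, so for $x$ with $d(x)\sim \e$ lying "above" the patch $\Gamma_\e$, the contribution $\int_{\Gamma_\e} P(x,y)(g(y/\e)-\overline g)\,d\sigma(y)$ needs to be estimated from below.

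The main obstacle — and the crux of the argument — is precisely this lower estimate: the integrand $g(y/\e)-\overline g$ oscillates at scale $\e$ while the Poisson kernel at height $d(x)\sim\e$ has width $\sim\e$ in $y$, so cancellation is a genuine danger and one cannot naively bound below by sup times measure. The resolution is a matching-scales computation: I would change variables $y = x_0 + \e z$ and $x = x_0 + \e \xi$ (in boundary-adapted coordinates, with $\xi_d\sim 1$), so that the rescaled problem becomes, to leading order, the constant-coefficient problem in a half-space with boundary data $g(z + \text{shift})$; the integral becomes $\int P_{1/2}(\xi, z)(g(z+c)-\overline g)\,dz$ where $P_{1/2}$ is the (frozen-coefficient) half-space Poisson kernel, and this is a fixed (nonzero, for a suitable $\xi$ and shift $c$) multiple of a nontrivial Fourier coefficient of $g$ — nonzero because $g\not\equiv\overline g$. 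One must check that replacing the true Poisson kernel and domain by the frozen half-space model produces only lower-order errors, which follows from the smoothness assumptions (iv) and the curvature assumption (iii) (these control the error in the boundary flattening and in freezing $A$ at $x_0$ over a ball of radius $\e$).

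Finally, carrying out the $L^p$ integration: the set of $x\in D$ with $d(x)\in[\e,2\e]$ lying over $\Gamma_\e$ has $d$-dimensional measure $\gtrsim \e$ (it is a slab of thickness $\sim\e$ over a patch of fixed surface area), and on a positive-measure subset of it we have shown $|w_\e(x)|\gtrsim m$. Hence
$$
\|u_\e - u_0\|_{L^p(D)}^p \;=\; \int_D |w_\e|^p \;\geq\; \int_{\{d(x)\sim\e,\ x \text{ over }\Gamma_\e\}} |w_\e|^p \;\gtrsim\; \e\, m^p,
$$
which gives $\|u_\e-u_0\|_{L^p(D)} \geq C_p\, \e^{1/p}\, \|g-\overline g\|_{L^\infty(\mathbb T^d)}$ as claimed. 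The constant $C_p$ depends only on $p$, the ellipticity constant, and the geometry of $D$, and in particular is independent of $\e$; I would also remark that combined with Theorem~\ref{Thm-L-p} this shows the rate $\e^{1/p}$ is sharp for $d\geq 4$.
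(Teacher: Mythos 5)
Your overall skeleton is the same as the paper's: bound $\|u_\e-u_0\|_{L^p}$ from below by showing $|u_\e-u_0|\gtrsim\|g-\overline g\|_{L^\infty}$ on a strip of thickness $\sim\e$ sitting over a boundary set of surface measure bounded below uniformly in $\e$. However, the two steps that make this work are asserted rather than proved, and the second one, as you have set it up, is a genuine gap. First, the existence of a boundary patch $\Gamma_\e\subset\Gamma$ of measure bounded below (independently of $\e$) on which $|g(y/\e)-\overline g|\geq\tfrac12\|g-\overline g\|_{L^\infty}$ does \emph{not} follow from periodicity plus a ``mean-value / continuity argument'': it is an equidistribution statement for the dilated surface $\e^{-1}\Gamma$ modulo $\Z^d$, and it uses the uniform convexity of $\Gamma$ quantitatively. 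The paper proves exactly this (Lemma \ref{Lem-equidis-surface} and Corollary \ref{cor-equidist}) via Weyl sums and the decay $|\widehat{\sigma}(\xi)|\leq C|\xi|^{-(d-1)/2}$ of the Fourier transform of the surface measure; on a flat boundary piece aligned with the lattice the statement can fail, so some such argument is unavoidable.

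Second, and more seriously: by evaluating $w_\e=u_\e-u_0$ only at heights $d(x)\sim\e$, you match the width of the Poisson kernel to the period of the data and are forced into the blow-up / frozen-coefficient half-space analysis, whose conclusion --- that the rescaled integral is ``a fixed nonzero multiple of a nontrivial Fourier coefficient of $g$, for a suitable $\xi$ and shift $c$'' --- is precisely the crux and is not established. The shift $x_0/\e \bmod 1$ and the orientation of the tangent plane relative to the lattice are dictated by $\e$ and by the point of $\Gamma$, not chosen by you; the rescaled value at height of order one is a full Fourier series with $\e$-dependent phases (with non-decaying modes when the normal is rational), and you need it bounded below in modulus on a set of $x$ of measure $\gtrsim\e$, uniformly as $\e\to0$. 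Nothing in the proposal rules out degeneration of this quantity along a sequence of $\e$'s. The paper avoids the cancellation issue altogether by working at heights that are a \emph{small} multiple of $\e$: with $\delta\sim\e\|g\|_\infty/\mathrm{Lip}(g)$, the concentration Lemma \ref{Lem-boundary values} (near part controlled by $\delta\,\mathrm{Lip}(g_\e)\leq\tfrac18\|g\|_\infty$, far part by $P(x,y)\leq C\,d(x)/|x-y|^d$) gives $|u_\e(x)-g_\e(\xi)|<\tfrac14\|g\|_\infty$ for $|x-\xi|\leq C_2\delta$, so no oscillatory-integral lower bound is needed. If you replace your slab $\{d(x)\in[\e,2\e]\}$ by $\{d(x)\leq c\e\}$ with $c$ small (depending on $g$ but not on $\e$) and prove such a concentration estimate, your argument closes and still yields measure $\gtrsim\e$, hence the rate $\e^{1/p}$.
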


Theorems \ref{Thm-L-p}-\ref{Thm-Optimality} imply that the convergence rate of homogenization of the Dirichlet problem
with fixed operator and oscillating boundary data is optimal when $d\geq 4$.

Following \cite{KLS1} we set $P_\gamma^k(x)=x_\gamma(0,...,1,0,...)$, with $1$ in the $k$-th position, where
$1\leq \gamma \leq d$ and $1\leq k \leq N$. We also let $\mathcal{L}_\e^*$ to be the
formal adjoint to $\mathcal{L}_\e$, that is the matrix of coefficients of $\mathcal{L}_\e^*$ is $A_{ji}^{\beta \alpha}$.

\begin{theorem}\label{Thm-L-p-oscillating}(Homogenization of elliptic systems)
Assume $d\geq 3$, and that assumptions (i)-(iv) hold. For each $\e>0$ let $u_\e$ be the solution to the following problem
\begin{equation}\label{problem-form-osc}
\mathcal{L}_\e u_\e (x) = 0 , \ x \in D \qquad{ and } \qquad u_\e(x)= g \left(x, \frac{x}{\e} \right), \ x \in \partial D.
\end{equation}
If $\mathcal{L}_\e^* (P_\gamma^k)=0 $ in $D$ for all $1\leq k \leq N$, $1\leq \gamma \leq d$, and $\e>0$
then there exists a fixed boundary data $g^*$ depending on operator, domain and boundary data $g$ so that
if $u_0$ is the solution to the homogenized problem
$$
\mathcal{L}_0 u_0 (x) = 0 , \ x \in D \qquad{ and } \qquad u_0(x)= g^*(x), \ x \in \partial D,
$$
then
$$
|| u_\e - u_0 ||_{L^p(D)} \leq C_p [ \e (\ln(1/ \e))^2 ]^{1/p}.
$$
\end{theorem}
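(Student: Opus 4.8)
The plan is to separate the two averaging mechanisms present in Theorem~\ref{Thm-L-p-oscillating} --- the homogenization of the operator $\mathcal{L}_\e\to\mathcal{L}_0$ and the homogenization of the oscillating boundary data $g(x,x/\e)\to g^*(x)$ --- and to handle the first through the estimates of \cite{KLS1} and the second through the Fourier/stationary-phase scheme already used to prove Theorem~\ref{Thm-L-p}. The structural hypothesis $\mathcal{L}_\e^*(P_\gamma^k)=0$ is precisely what renders the first step sharp: it forces the flux correctors of $\{\mathcal{L}_\e\}$ to stay bounded (with no logarithmic growth), and this is the hypothesis under which \cite{KLS1} obtain near-optimal convergence rates and, in particular, good pointwise control of the Poisson kernel of $\mathcal{L}_\e$ and of the boundary-layer tail that defines $g^*$.

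Concretely, I would argue as follows. Let $v_\e$ solve $\mathcal{L}_\e v_\e=0$ in $D$ with the \emph{non-oscillating} data $v_\e=g^*$ on $\partial D$, where $g^*$ is the homogenized boundary value produced by the boundary-layer construction of \cite{GM} (well defined, with a quantitative modulus of continuity, thanks to assumption (iii) and $\mathcal{L}_\e^*(P_\gamma^k)=0$). By \cite{KLS1} one has a comparison of the type $\|v_\e-u_0\|_{L^2(D)}\leq C\,\e^{1/2}\ln(1/\e)$, together with an interior estimate $|v_\e(x)-u_0(x)|\leq C\,\e/d(x)$ on $\{d(x)>\e\}$; splitting $D$ into the collar $\{d(x)\leq\e\}$, where $\|v_\e-u_0\|_{L^\infty(D)}\leq C$, and its complement, where one integrates $(\e/d(x))^p$, gives $\|v_\e-u_0\|_{L^p(D)}\leq C_p(\e\ln(1/\e))^{1/p}$. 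It remains to control $w_\e:=u_\e-v_\e$, which solves $\mathcal{L}_\e w_\e=0$ in $D$ with the oscillating data $w_\e(x)=g(x,x/\e)-g^*(x)$ on $\partial D$; by the very definition of $g^*$ the corresponding half-space boundary-layer problem has zero tail, so $w_\e$ is driven by a genuinely oscillatory effect. Writing $w_\e$ through the Poisson kernel of $\mathcal{L}_\e$, expanding $y\mapsto g(x,y)-g^*(x)$ in its Fourier series, and running the stationary-phase argument of \cite{ASS} and of the proof of Theorem~\ref{Thm-L-p} --- now with the Poisson-kernel bounds for $\mathcal{L}_\e$ from \cite{KLS1} in place of the explicit constant-coefficient kernel, and using the uniform convexity of $\partial D$ to extract the gain --- should yield, for each $\kappa>d-1$, a pointwise decay $|w_\e(x)|\leq C_\kappa\,\e^{(d-1)/2}\,d(x)^{-\kappa}$ exactly as in (\ref{pointwise-est}). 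The same collar/interior splitting then produces $\|w_\e\|_{L^p(D)}\leq C_p\big(\e(\ln(1/\e))^2\big)^{1/p}$, the extra logarithm accounting for the slow decay of the boundary-layer tail (and, for $d=3$, for the logarithm already present in Theorem~\ref{Thm-L-p}). The triangle inequality applied to $u_\e-u_0=w_\e+(v_\e-u_0)$ completes the proof.

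The main obstacle is the second step: replacing the explicit Poisson kernel of a constant-coefficient elliptic operator --- on which the entire oscillatory-integral analysis of \cite{ASS} and of Theorem~\ref{Thm-L-p} rests --- by the Poisson kernel of $\mathcal{L}_\e$. One has to show that the $\e$-dependent corrections to this kernel (the first-order corrector, the flux corrector, and the homogenization error), controlled by \cite{KLS1} precisely because $\mathcal{L}_\e^*(P_\gamma^k)=0$, do not destroy the stationary-phase cancellation responsible for the factor $\e^{(d-1)/2}$, and to do so uniformly up to $\partial D$ so that only logarithmic losses survive. Verifying that these losses amount to at most $(\ln(1/\e))^2$, and that the boundary-layer construction of \cite{GM} defining $g^*$ is compatible with the Fourier expansion used here, are the remaining technical points.
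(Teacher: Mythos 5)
There is a genuine gap, and it sits exactly where you park it as ``the main obstacle'': your Step B never gets off the ground. You propose to write $w_\e=u_\e-v_\e$ through the Poisson kernel of the \emph{oscillating} operator $\mathcal{L}_\e$ and rerun the stationary-phase argument of \cite{ASS}/Theorem~\ref{Thm-L-p}. But that argument lives or dies on the uniform derivative bounds (\ref{Poisson-deriv-est}) for the kernel in the $y$-variable: one integrates by parts against $\Ex[\lambda F(y')]$ with $\lambda\sim|m|/\e$, and each integration is only a gain because differentiating the kernel costs nothing in $\e$. The Poisson kernel of $\mathcal{L}_\e$ contains $A(y/\e)$ and the conormal derivative of the $\e$-dependent Green matrix, so it oscillates at scale $\e$ in $y$; its tangential derivatives generically cost a factor $\e^{-1}$ each, which exactly cancels the $\lambda^{-1}$ gained per integration by parts. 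The hypothesis $\mathcal{L}_\e^*(P_\gamma^k)=0$ (divergence-free rows) does not remove this oscillation of the kernel, and the claimed pointwise bound $|w_\e(x)|\leq C_\kappa\,\e^{(d-1)/2}d(x)^{-\kappa}$ for the variable-operator problem is precisely the kind of statement that is \emph{not} available --- if it were, it would by itself yield rates far stronger than \cite{GM}. Acknowledging the difficulty does not resolve it, and no mechanism is offered for doing so; also, your Step A leans on regularity and quantitative properties of the G\'erard-Varet--Masmoudi boundary-layer datum $g^*$ that are asserted rather than established.

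The paper's proof avoids this entirely by decomposing in the opposite direction. Theorem~\ref{Thm-KLS} (Kenig--Lin--Shen) compares $u_\e$ with $v_\e$ solving the \emph{homogenized, constant-coefficient} problem $\mathcal{L}_0 v_\e=0$ with the modified oscillating data $\omega_\e g_\e$, at cost $C\{\e(\ln(1/\e))^2\}^{1/p}$; all the $\e$-dependence of the operator is absorbed into the weight $\omega_\e$ of (\ref{omega}). The structural hypothesis is then used for something quite specific: it forces the adjoint Dirichlet correctors to satisfy $\Phi^{*k}_{\e,\gamma}\equiv P^k_\gamma$, so $\omega_\e$ collapses to the explicit form (\ref{omega-2}), $\omega_\e^{ij}(y)=h_{ik}(y)n_\alpha(y) n_\beta(y)A^{\alpha\beta}_{kj}(y/\e)$. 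Hence the data of $v_\e$ is a smooth function of $(y,y/\e)$, the double Fourier expansion identifies its mean part as $g^*$, and the remaining oscillating modes are handled by Theorem~\ref{Thm-L-p} applied to the fixed operator $\mathcal{L}_0$, whose Poisson kernel does satisfy (\ref{Poisson-deriv-est}). In short: the correct use of $\mathcal{L}_\e^*(P_\gamma^k)=0$ is to trivialize the correctors in the KLS weight, not to tame the Poisson kernel of $\mathcal{L}_\e$; your decomposition leaves the oscillating-operator/oscillating-data interaction inside $w_\e$, which is the part no available tool controls.
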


The restriction on the operator in the last Theorem means that a certain family of vector fields in
$\R^d$ must be divergence free. To see this, observe that from the definition of $\mathcal{L}_\e^*$ and
$P_\gamma^k$ we have
\begin{equation}\label{P-is-sol}
0 = \mathcal{L}_\e^* (P_\gamma^k) =
\frac{\partial}{\partial x^\alpha}\left[ A^{\beta \alpha}_{ji}\left( \frac{\cdot}{\e} \right) \frac{\partial (P_\gamma^k)_j}{\partial x^\beta}  \right](x)=
\frac{1}{\e} \frac{\partial  A_{ki}^{\gamma \alpha} }{\partial x^\alpha} \left( \frac{x}{\e} \right), \ \ \forall x\in D, \ \forall \e>0.
\end{equation}
If $\e>0$ is small enough, the domain $(1/ \e ) D$ will contain a lattice cube, hence in view of periodicity of $A$
the condition (\ref{P-is-sol}) is equivalent to
$$
\frac{\partial  A_{ki}^{\gamma \alpha} }{\partial x^\alpha} (x)=0, \qquad \forall x\in \R^d.
$$
Now set $v_{k,i}^{\gamma}(x)=(A_{ki}^{\gamma 1 },... , A_{ki}^{\gamma d})(x) $, where $x \in \R^d$, and
$1 \leq  k,i \leq N$, $1 \leq \gamma \leq d$. We obtain that condition (\ref{P-is-sol}) of Theorem \ref{Thm-L-p-oscillating}
is equivalent to
$$
\mathrm{div}( v_{k,i}^{\gamma})(x) = 0 , \qquad \forall x\in \R^d, \ 1\leq k,i \leq N, \ 1\leq \gamma \leq d.
$$
Observe that for scalar equations ($N=1$) the last condition simply means that
the rows of the coefficient matrix $A$ considered as vector fields in $\R^d$ must be divergence free.

\begin{remark}
In all results above the boundary of the domain is assumed to be strictly   curved in all
directions. On the other extreme, when the boundary consists of flat pieces, the problem
for scalar equations $(N=1)$ is studied in our paper \cite{ASS2}.
 It should be noted that the methods in \cite{ASS2} does not apply, at least not straightforwardly, to  systems.
\end{remark}

\subsection{Preliminaries and Notation} Throughout the text by $\Gamma$ we denote the boundary of the domain $D$.
For each $x\in \R$ $(=\R^1)$ we set $\Ex(x)=e^{2 \pi i x}$. Next, for $x\in \R^d$ by $B(x,r)$ we denote an open ball in $\R^d$
centered at $x$ and with radius $r>0$. For $x\in \R^d$,  if not stated otherwise $| x |$ denotes its standard norm.
Also by $C$, $C_1$, $C_2$ we denote absolute constants that may vary from formulae to formulae.

Before proceeding to  proofs of  main results, we need the following statements.

\begin{lem}\label{Lem-Poisson-est} Let $P(x,y)$, where $x\in D$ and $y\in \Gamma$, be the Poisson kernel for the
operator $\mathcal{L}$ in the domain $D$ under assumptions (i)-(iv). Then for each $\alpha=(\alpha_1,...,\alpha_d) \in \Z^d_{+}$ there exists a
constant $C_{\alpha}$ depending on $\alpha$, domain $D$, and operator $\mathcal{L}$, such that
\begin{equation}\label{Poisson-deriv-est}
|D^{\alpha}_y P(x,y)| \leq C_{\alpha} \frac{1}{|x-y|^{d-1+|\alpha|}}, \qquad x\in D, y \in \Gamma,
\end{equation}

\begin{equation}\label{Poisson-dist-est}
| P(x,y)| \leq C_0 \frac{d(x)}{|x-y|^{d}}, \qquad x\in D, y \in \Gamma,
\end{equation}
where $|\alpha|=|\alpha_1|+...+|\alpha_d|$, and $d(x)$ is the distance of the point $x$ from the boundary $\Gamma$.
\end{lem}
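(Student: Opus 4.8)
The plan is to deduce both estimates from pointwise bounds on the Green matrix $G(x,y)$ of $\mathcal{L}$ in $D$, which exists and is smooth off the diagonal by the classical Agmon--Douglis--Nirenberg theory under assumptions (i)--(iv). The first step is to record the representation of the Poisson kernel through $G$: writing the solution of $\mathcal{L}u=0$ in $D$, $u=g$ on $\Gamma$, via Green's identity, one obtains that $P(x,y)$ is, up to bounded smooth factors (the unit normal on $\Gamma$ and the coefficient matrix $A$, smooth by (iii)--(iv)), the conormal derivative of $G(x,\cdot)$ at the boundary point $y$ — that is, a first order expression in the $y$-derivatives of $G$. Consequently (\ref{Poisson-deriv-est}) reduces to the claim
$$
|D^k_y G(x,y)|\le C_k\,|x-y|^{2-d-k},\qquad x\in D,\ y\in\Gamma,
$$
(with $C_k|x-y|^{-k}(1+|\log|x-y||)$ on the right when $d=2$ and $k=0$), and (\ref{Poisson-dist-est}) reduces to the same bound with one $y$-derivative and an extra gain of the factor $d(x)/|x-y|$.

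The second step is to prove this $y$-derivative bound on $G$ up to the boundary. I would start from the classical interior size estimate $|G(x,y)|\le C|x-y|^{2-d}$ (respectively $C(1+|\log|x-y||)$ for $d=2$), and upgrade it to a boundary derivative estimate by rescaling. Fix $y_0\in\Gamma$ and set $r=\tfrac14|x-y_0|$; then $G(x,\cdot)$ solves $\mathcal{L}^*$ in $B(y_0,2r)\cap D$ with zero Dirichlet data on $B(y_0,2r)\cap\Gamma$. Flattening $\Gamma$ near $y_0$ and rescaling the half-ball to unit size — this is where smoothness of $\Gamma$ and of the coefficients of $\mathcal{L}$ enters, in order to keep the constants uniform in $r$ — one applies interior and boundary Schauder estimates for elliptic systems to get $\|D^k_y G(x,\cdot)\|_{L^\infty(B(y_0,r)\cap D)}\le C_k r^{-k}\sup_{B(y_0,2r)\cap D}|G(x,\cdot)|\le C_k r^{-k}\cdot C r^{2-d}$, using that $|x-y|\ge 2r$ on $B(y_0,2r)$. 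Evaluating at $y=y_0$ gives the claimed bound, hence (\ref{Poisson-deriv-est}).

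The third step handles the distance-weighted bound (\ref{Poisson-dist-est}). If $d(x)\ge\tfrac18|x-y|$ it is already contained in (\ref{Poisson-deriv-est}) with $|\alpha|=0$, since then $|x-y|^{1-d}\le 8\,d(x)|x-y|^{-d}$. If $d(x)<\tfrac18|x-y|$, let $\bar x\in\Gamma$ be the nearest boundary point to $x$ (unique by uniform convexity). Since $G(\cdot,y)$ and $G(x,\cdot)$ both vanish on $\Gamma$, the Green matrix with pole on $\Gamma$ is zero, so $P(\bar x,y)=0$ and $P(x,y)$ equals the $y$-conormal derivative of $G(x,\cdot)-G(\bar x,\cdot)$. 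Writing $G(x,\cdot)-G(\bar x,\cdot)=\int_0^1(x-\bar x)\cdot\nabla_1 G(\bar x+s(x-\bar x),\cdot)\,ds$ and estimating $\nabla_1 G$ on the short segment near $\bar x$ by the boundary gradient estimate (the pole $y$ lying at distance comparable to $|x-y|$) produces the extra factor $d(x)/|x-y|$, namely $\|G(x,\cdot)-G(\bar x,\cdot)\|_{L^\infty(B(y,c|x-y|)\cap D)}\le C\,d(x)\,|x-y|^{1-d}$; feeding this into the $y$-derivative estimate from the second step (one $y$-derivative, from the conormal operator, costing a factor $|x-y|^{-1}$) yields $|P(x,y)|\le C\,d(x)\,|x-y|^{-d}$, which is (\ref{Poisson-dist-est}).

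I expect the main obstacle to be the second step: the passage from interior to boundary regularity of $G$, uniformly in the scale $r$, via the flattening-and-rescaling argument — this is where the smoothness of both $\Gamma$ and the coefficients is genuinely needed, and it requires some care. Once those uniform boundary Schauder estimates are available, the remainder is bookkeeping with the three quantities $d(x)$, $\operatorname{dist}(y,\Gamma)=0$ and $|x-y|$. In dimension two the logarithmic factor carried by the size bound for $G$ is harmless, being compensated by the full power of $|x-y|$ gained on differentiating in $y$ down to scale $|x-y|$; alternatively one first improves it to the log-free gradient bound $|\nabla G(x,y)|\le C|x-y|^{-1}$ up to $\Gamma$, again from the boundary gradient estimate together with the vanishing of $G(\cdot,y)$ on $\Gamma$.
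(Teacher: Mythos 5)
The paper itself gives no proof of this lemma: estimate (\ref{Poisson-deriv-est}) is quoted from Lemma 2.1 of \cite{ASS} (which rests on Green--matrix bounds as in \cite{DM}), and (\ref{Poisson-dist-est}) is quoted from Theorem 3 of \cite{AL-systems}. What you write is essentially a reconstruction of the arguments behind those citations: the Poisson kernel as the conormal derivative in $y$ of the Green matrix, rescaled boundary Schauder estimates at scale $r\sim|x-y|$ (with constants uniform in $r$ thanks to smoothness of $A$ and $\Gamma$) for the $y$-derivative bounds, and the vanishing of $G$ in the first variable on $\Gamma$ together with the segment/mean-value argument for the extra factor $d(x)/|x-y|$. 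For $d\geq 3$ this is sound and complete in outline; the case split according to whether $d(x)$ is larger or smaller than $\tfrac18|x-y|$, and the observation that $|w-z|\geq c\,|x-y|$ for $w$ on the segment $[\bar x,x]$ and $z\in B(y,c|x-y|)\cap D$, are correct, provided you also record the first-variable analogue of your step-two estimate, $|\nabla_x G(w,z)|\leq C|w-z|^{1-d}$ up to $\Gamma$, which follows by the same rescaling argument applied to $\mathcal{L}$ instead of $\mathcal{L}^*$.

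The one genuine gap is the case $d=2$, which the lemma covers. Your claim that the logarithm in $|G(x,y)|\leq C(1+|\log|x-y||)$ is ``compensated'' by differentiating in $y$ is not correct: the rescaled Schauder step gives $|D^k_yG(x,y_0)|\leq C_k r^{-k}\sup_{B(y_0,2r)\cap D}|G(x,\cdot)|\leq C_k r^{-k}(1+|\log r|)$ with $r\sim|x-y_0|$, so the logarithm survives into both (\ref{Poisson-deriv-est}) and (\ref{Poisson-dist-est}) as an extra factor $1+|\log|x-y||$. Your alternative fix does not close this either: each trade of the form ``vanishing on $\Gamma$ plus boundary gradient estimate'' is log-neutral, e.g. $|G(x,z)|\leq d(z)\sup|\nabla G(x,\cdot)|\leq C\,d(z)\,(1+|\log|x-z||)/|x-z|$, and feeding this back into the Schauder step returns $(1+|\log r|)/r$, so no iteration of these two tools removes the factor. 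To obtain the stated log-free bounds in two dimensions one needs an additional input, for instance a boundary parametrix built from the explicit half-plane Green function of the frozen-coefficient operator (which yields $|G(x,z)|\leq C\,d(z)/|x-z|$ near $\Gamma$ with no logarithm), or, for $N=1$, a maximum-principle comparison; alternatively one can simply invoke the known two-dimensional Green-matrix and Poisson-kernel estimates of \cite{DM} and \cite{AL-systems}, which is in effect what the paper does in every dimension.
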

Estimates in (\ref{Poisson-deriv-est}) are proved in \cite{ASS}, Lemma 2.1 (see also \cite{DM}).
For the second estimate (with distance) see \cite{AL-systems}, Theorem 3.

Using (\ref{Poisson-dist-est}) we can establish uniform bounds with respect to $x\in D$ on the surface integral of $|P(x,y)|$,
which we will use later on.

\begin{claim}
Let $P(x,y)$ be as above. Then, there exists a constant $C$ so that
\begin{equation}\label{P-int-is-unif-bdd}
\int\limits_\Gamma |P(x,y)| d \sigma(y) \leq C , \qquad \forall x\in D.
\end{equation}

\end{claim}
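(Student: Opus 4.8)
The plan is to bound the surface integral by decomposing $\Gamma$ into dyadic annuli around the nearest boundary point to $x$ and applying the distance estimate \eqref{Poisson-dist-est}. First I would fix $x \in D$ and let $x^* \in \Gamma$ be a point with $|x - x^*| = d(x)$ (if there are several, pick any). For $y \in \Gamma$ and $|y - x^*| \geq 2 d(x)$ we have $|x - y| \geq |y - x^*| - d(x) \geq \tfrac12 |y - x^*|$, so on that part of $\Gamma$ the kernel satisfies $|P(x,y)| \leq C_0 \, 2^d \, d(x) / |y - x^*|^d$. I would then split $\Gamma \setminus B(x^*, 2d(x))$ into dyadic shells $A_j = \{ y \in \Gamma : 2^j d(x) \leq |y - x^*| < 2^{j+1} d(x) \}$ for $j = 1, 2, \dots, J$, where $J \sim \log_2(\mathrm{diam}(D)/d(x))$ is the largest index for which $A_j$ is nonempty. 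On $A_j$ the kernel is $\lesssim d(x) / (2^j d(x))^d = 2^{-jd} d(x)^{1-d}$, and since $\Gamma$ is a smooth compact hypersurface of dimension $d-1$, $\sigma(A_j) \leq \sigma(\Gamma \cap B(x^*, 2^{j+1} d(x))) \leq C (2^{j+1} d(x))^{d-1}$. Multiplying and summing the geometric series $\sum_j 2^{-jd} 2^{(j+1)(d-1)} = 2^{d-1}\sum_j 2^{-j}$ gives a bound independent of $x$.

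For the remaining near part $\Gamma \cap B(x^*, 2 d(x))$ I would simply use $|P(x,y)| \leq C_0 \, d(x) / |x-y|^d$ together with the crude lower bound $|x - y| \geq d(x)$ valid for all $y \in \Gamma$; this yields $|P(x,y)| \leq C_0 \, d(x)^{1-d}$ on that set, whose surface measure is at most $C (2 d(x))^{d-1}$, so the contribution is again $\leq C$. Combining the two pieces gives \eqref{P-int-is-unif-bdd} with a constant depending only on $d$, $C_0$, and the geometry of $D$.

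The only point requiring a little care is the geometric measure estimate $\sigma(\Gamma \cap B(z, r)) \leq C r^{d-1}$, uniform in $z \in \Gamma$ and $r > 0$; this is standard for a compact $C^1$ (indeed smooth) hypersurface and I would invoke it without further comment, noting for large $r$ that it reduces to $\sigma(\Gamma) < \infty$. I do not expect any serious obstacle here: the estimate \eqref{Poisson-dist-est} encodes exactly the decay needed, and the dimension of $\Gamma$ being $d-1$ makes the dyadic sum converge with room to spare (the convexity hypothesis (iii) is not even needed for this claim, only smoothness and compactness). An alternative, slightly slicker route is to change variables to project $\Gamma$ locally onto the tangent plane at $x^*$ and estimate $\int_{\R^{d-1}} d(x) \, (d(x)^2 + |z|^2)^{-d/2} \, dz$ directly, which evaluates to a constant by scaling $z = d(x) w$; but the dyadic decomposition is cleaner to write and avoids fussing with the global structure of $\Gamma$.
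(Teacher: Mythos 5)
Your proof is correct, but it takes a genuinely different route from the paper. You run a dyadic decomposition of $\Gamma$ into shells $A_j=\{y\in\Gamma:\ 2^jd(x)\le|y-x^*|<2^{j+1}d(x)\}$ around the nearest boundary point $x^*$, combine the kernel bound \eqref{Poisson-dist-est} with the Ahlfors-type upper regularity $\sigma(\Gamma\cap B(z,r))\le Cr^{d-1}$, and sum a geometric series; the near zone is handled with the trivial bound $|x-y|\ge d(x)$. All steps check out: the triangle inequality $|x-y|\ge\frac12|y-x^*|$ on the far shells, the measure bound (standard for a compact smooth hypersurface), and the convergence of $\sum_j 2^{-j}$ are fine, and you are right that convexity is not needed for this particular claim. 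The paper instead proceeds by the route you mention as an aside: after a rigid motion it writes $\Gamma$ near $x^*$ as a graph $(y',\varphi(y'))$ over the tangent plane, uses $\varphi(0)=\nabla\varphi(0)=0$ and orthogonality of $x-x^*$ to the tangent plane to get $|x-(y',\varphi(y'))|^2\ge |x|^2+c|y'|^2$, and evaluates the resulting integral in polar coordinates, obtaining the intermediate estimate $\int_{\Gamma\cap B(x^*,\delta)}|x-y|^{-d}\,d\sigma(y)\le C/d(x)$ (an $\arctan$ appears), which when multiplied by $d(x)$ from \eqref{Poisson-dist-est} gives the claim. What each buys: your dyadic argument is more elementary and robust (it needs only upper Ahlfors regularity of $\sigma$, no graph parametrization or curvature information), while the paper's explicit flattening computation yields the quantitative bound $\int_{|y-\xi|\ge\delta}|x-y|^{-d}\,d\sigma(y)\le C/\delta$ that is reused verbatim in the proof of Lemma \ref{Lem-boundary values}; your shell decomposition would also deliver that estimate, but you would need to state it separately.
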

\begin{proof}

Fix $x\in D$. Without loss of generality we will assume that $d(x)=|x|$, and the tangent plane
to $\Gamma$ at $0$ is $\{x\in \R^d: \ x_d=0 \}$, since otherwise we may bring $x$ and $\Gamma$ to
these positions by translation and rotation of the coordinate system.
Since $D$ is convex and $d(x)=|x|$ it is clear that $x$ is orthogonal to the tangent plane of $\Gamma$ at 0.
Next, in view of the smoothness of the domain there exists a smooth function $\varphi:\R^{d-1} \to \R$ so that
for some $0<\delta<1$ small, which can be chosen independently of $x$, we have
$$
\Gamma \cap B(0,\delta) = \{(y', \varphi(y')): \ |y'|\leq 10 \delta \} \cap B(0,\delta),
$$
where $y'=(y_1,...,y_{d-1})$. Also, it is clear that $\varphi(0)=\nabla \varphi(0)=0$, from which we get
that $|\varphi(y')| \leq C |y'|^2 $, where $|y'|\leq \delta$ .

It follows from (\ref{Poisson-dist-est}) that to get (\ref{P-int-is-unif-bdd}) it is enough to
show that
$$
\int\limits_{ \Gamma \cap B(0,\delta) } \frac{d\sigma(y)}{|x-y|^d}  \leq C \frac{1}{| x |},
$$
where the constant $C$ is independent of $x$. Now, making a change of variables in the last integral we get
\begin{equation}\label{change-of-var}
\int\limits_{ \Gamma \cap B(0,\delta) } \frac{d\sigma(y)}{|x-y|^d}  \leq
C \int\limits_{ |y'|\leq \delta } \frac{dy'}{| x- (y', \varphi(y') ) |^d} .
\end{equation}

From orthogonality of $x$ to $\{x\in \R^d: \ x_d=0 \}$ and the mentioned properties of $\varphi$ we have
$$
| x- (y', \varphi(y')) |^2 = |x'|^2 + |y'|^2 + x_d^2 -2x_d \varphi(y') + \varphi^2(y') \geq | x |^2 + \frac{1}{2} | y'|,
$$
if $|x|$ and $\delta>0$ are sufficiently small. Using the last inequality from (\ref{change-of-var}),
and integrating in the spherical coordinates we get
\begin{multline*}
\int\limits_{ |y'|\leq \delta } \frac{dy'}{| x- (y', \varphi(y') ) |^d}  \leq C
\int\limits_{ |y'|\leq \delta } \frac{d y'}{ ( | x |^2 + | y' |^2 )^{d/2} } \leq C
\int\limits_0^\delta \frac{t^{d-2}}{ ( | x |^2 + t^2 )^{d/2} } dt \leq \\
C \int\limits_0^\delta \frac{dt}{| x |^2 + t^2 } = C \frac{1}{| x |} \arctan \frac{\delta}{| x |}.
\end{multline*}
Since $d(x)=| x |$ the last expression completes the proof.
\end{proof}

\begin{lem}\label{Lem-coeff} (\cite{ASS}, Lemma 2.3)
If $f\in C^k(\mathbb{T}^d)$ and $\tau \in \R$, then
$$
\sum_{\substack{m \in \Z^d \\ m\neq 0}} \frac{1}{| m |^{\tau}} |c_m(f)| \leq C_{k+\tau} \left( \sum\limits_{ \alpha \in \Z^d_+, \
|\alpha|=k} \| D^{\alpha} f \|_2^2   \right)^{1/2},
$$
provided $k+\tau>d/2$, where $c_m(f)$ is the $m$-th Fourier coefficient of $f$,
and $|\alpha|=\alpha_1+...+\alpha_d$.
\end{lem}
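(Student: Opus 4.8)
The plan is to deduce the inequality from Parseval's identity by means of a single Cauchy--Schwarz splitting. First I would write, for $m\neq 0$,
$$
\frac{|c_m(f)|}{|m|^{\tau}}=\frac{1}{|m|^{\tau+k}}\cdot\bigl(|m|^{k}|c_m(f)|\bigr),
$$
and apply the Cauchy--Schwarz inequality over $m\in\Z^d\setminus\{0\}$ to obtain
$$
\sum_{\substack{m\in\Z^d\\ m\neq 0}}\frac{|c_m(f)|}{|m|^{\tau}}\leq\Bigl(\sum_{\substack{m\in\Z^d\\ m\neq 0}}\frac{1}{|m|^{2(\tau+k)}}\Bigr)^{1/2}\Bigl(\sum_{\substack{m\in\Z^d\\ m\neq 0}}|m|^{2k}|c_m(f)|^{2}\Bigr)^{1/2}.
$$
The first factor is a finite constant, which I would call $C_{k+\tau}$: comparing with $\int_{|x|\geq 1}|x|^{-s}\,dx$, the lattice sum $\sum_{m\neq 0}|m|^{-s}$ converges exactly when $s>d$, and the hypothesis $k+\tau>d/2$ is precisely $2(\tau+k)>d$. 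So everything reduces to bounding the weighted $\ell^2$-sum $\sum_{m\neq 0}|m|^{2k}|c_m(f)|^2$ by the right-hand side.

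For that second factor I would pass to the Fourier coefficients of the order-$k$ derivatives of $f$. Since $f\in C^{k}(\mathbb{T}^d)$, integration by parts on the torus gives $c_m(D^{\alpha}f)=(2\pi i m)^{\alpha}c_m(f)$ for every multi-index $\alpha$ with $|\alpha|\leq k$, hence
$$
\sum_{\substack{\alpha\in\Z^d_+\\ |\alpha|=k}}|c_m(D^{\alpha}f)|^{2}=(2\pi)^{2k}\,|c_m(f)|^{2}\sum_{\substack{\alpha\in\Z^d_+\\ |\alpha|=k}}m^{2\alpha}.
$$
The multinomial theorem yields $|m|^{2k}=(m_1^2+\cdots+m_d^2)^{k}=\sum_{|\alpha|=k}\binom{k}{\alpha}m^{2\alpha}\leq\bigl(\max_{|\alpha|=k}\binom{k}{\alpha}\bigr)\sum_{|\alpha|=k}m^{2\alpha}$, so $\sum_{|\alpha|=k}m^{2\alpha}\geq c_{d,k}|m|^{2k}$ with $c_{d,k}>0$ depending only on $d,k$. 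Combining the two facts gives $|m|^{2k}|c_m(f)|^{2}\leq C_{d,k}\sum_{|\alpha|=k}|c_m(D^{\alpha}f)|^{2}$, and summing in $m$ and applying Parseval's identity on $\mathbb{T}^d$ (legitimate since each $D^{\alpha}f$ is continuous, hence in $L^2$) yields
$$
\sum_{\substack{m\in\Z^d\\ m\neq 0}}|m|^{2k}|c_m(f)|^{2}\leq C_{d,k}\sum_{\substack{\alpha\in\Z^d_+\\ |\alpha|=k}}\|D^{\alpha}f\|_{2}^{2}.
$$
Substituting this together with the bound on the first factor into the Cauchy--Schwarz estimate completes the argument, with $C_{k+\tau}$ absorbing $C_{d,k}$.

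I do not expect a genuine obstacle: the statement is a routine Sobolev-type embedding computation, and it is in any case quoted from \cite{ASS}. The only places needing a moment's care are verifying that the convergence threshold $s>d$ for the lattice sum is matched exactly by the stated hypothesis $k+\tau>d/2$, and the elementary lower bound $\sum_{|\alpha|=k}m^{2\alpha}\geq c_{d,k}|m|^{2k}$ relating the discrete quantity produced by integration by parts to a power of the Euclidean norm; both follow from the multinomial theorem. I would therefore keep the written proof to the few lines above.
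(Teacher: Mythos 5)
Your argument is correct: the splitting $|c_m(f)|/|m|^{\tau}=|m|^{-(\tau+k)}\cdot|m|^{k}|c_m(f)|$, Cauchy--Schwarz, the convergence of $\sum_{m\neq 0}|m|^{-2(k+\tau)}$ exactly under $k+\tau>d/2$, and the identity $c_m(D^{\alpha}f)=(2\pi i m)^{\alpha}c_m(f)$ combined with the multinomial bound $\sum_{|\alpha|=k}m^{2\alpha}\geq c_{d,k}|m|^{2k}$ and Parseval give precisely the stated estimate. The paper itself only quotes this lemma from \cite{ASS} without reproducing a proof, and your Cauchy--Schwarz/Parseval computation is the standard argument for it, so nothing further is needed.
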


\noindent An immediate consequence of Lemma \ref{Lem-coeff} is the following result.
\begin{lem}\label{Lem-finite-sum-of-coeff}
Let $\tau\in \R$, $\Omega$ be a compact subset of $\R^d$, and a function $f(x,y):\Omega\times \mathbb{T}^d \rightarrow \mathbb{C} $
be periodic in its second variable. Suppose that for all $\alpha\in \Z^d_{+}$ with $|\alpha| \leq k$,  $D^{\alpha}_y f(x,y) $ exists and is
continuous on $\Omega \times \mathbb{T}^d$. Then
$$
\sum_{\substack{m \in \Z^d \\ m\neq 0}} \frac{| c_m(f;x) | }{ | m |^{\tau} } \leq C_{k+\tau, f} , \qquad \forall  x\in \Omega,
$$
provided $k+\tau>d/2$, where $c_m(f;x)$ is the $m$-th Fourier coefficient of $f(x,\cdot)$.
\end{lem}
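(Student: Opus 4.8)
The plan is to reduce the claim to Lemma \ref{Lem-coeff} by fixing $x$ and applying the one-parameter result to the function $f(x,\cdot)$, then to make the resulting bound uniform in $x\in\Omega$ by a compactness argument. First, fix $x\in\Omega$ and set $f_x(\cdot)=f(x,\cdot)$. By hypothesis, for every multi-index $\alpha\in\Z^d_+$ with $|\alpha|\le k$ the derivative $D^\alpha_y f(x,y)$ exists and is continuous on $\Omega\times\mathbb{T}^d$; in particular $f_x\in C^k(\mathbb{T}^d)$, and its $m$-th Fourier coefficient is precisely $c_m(f;x)$. Since $k+\tau>d/2$, Lemma \ref{Lem-coeff} applies and gives
$$
\sum_{\substack{m\in\Z^d\\ m\neq 0}} \frac{|c_m(f;x)|}{|m|^\tau} \le C_{k+\tau}\left(\sum_{\substack{\alpha\in\Z^d_+\\ |\alpha|=k}} \|D^\alpha_y f(x,\cdot)\|_{L^2(\mathbb{T}^d)}^2\right)^{1/2},
$$
where $C_{k+\tau}$ does not depend on $x$.

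It remains to bound the right-hand side uniformly in $x\in\Omega$. For each $\alpha$ with $|\alpha|=k$, the map $(x,y)\mapsto D^\alpha_y f(x,y)$ is continuous on the compact set $\Omega\times\mathbb{T}^d$, hence bounded there by some constant $M_\alpha$; therefore $\|D^\alpha_y f(x,\cdot)\|_{L^2(\mathbb{T}^d)}^2\le M_\alpha^2\,|\mathbb{T}^d|=M_\alpha^2$ for all $x\in\Omega$. Summing over the finitely many $\alpha$ with $|\alpha|=k$ yields a constant $C_{k+\tau,f}:=C_{k+\tau}\bigl(\sum_{|\alpha|=k}M_\alpha^2\bigr)^{1/2}$ depending only on $k+\tau$ and $f$ (through $\Omega$, $k$, and the sup-norms of the top-order $y$-derivatives), and independent of $x$. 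This gives the asserted bound for every $x\in\Omega$.

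There is essentially no obstacle here: the only point requiring a word of care is the passage from "$c_m(f;x)$ is the $m$-th Fourier coefficient of $f(x,\cdot)$" to the applicability of Lemma \ref{Lem-coeff}, which needs $f_x\in C^k(\mathbb{T}^d)$ — this is exactly what the continuity hypothesis on $D^\alpha_y f$ provides — and the use of compactness of $\Omega\times\mathbb{T}^d$ to extract the uniform bound $M_\alpha$. Both are routine, so the lemma follows immediately as an "immediate consequence" of Lemma \ref{Lem-coeff}, as stated.
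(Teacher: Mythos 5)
Your argument is correct and is exactly the route the paper intends: it cites the lemma as an immediate consequence of Lemma \ref{Lem-coeff}, obtained by fixing $x$, applying that lemma to $f(x,\cdot)$, and using continuity of the top-order $y$-derivatives on the compact set $\Omega\times\mathbb{T}^d$ to make the bound uniform in $x$. Nothing is missing.
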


\section{Proof of $L^p$-convergence result}

\noindent \textbf{Proof of Theorem \ref{Thm-L-p}.} We divide the proof into some steps.

\noindent  \textbf{Step 1. Reduction to local graphs.} Let $z\in \Gamma$ and $r>0$ be small. Then there exists an orthogonal
transformation $\cR$ such that
\begin{equation}\label{def-orth-transform}
(\cR(\Gamma-z)) \cap B(0,r) =\{ (y', \psi(y')): \ |y'|\leq 10 r  \} \cap B(0,r),
\end{equation}
where $y'=(y_1,...,y_{d-1})$, $\psi(0)=\nabla \psi(0)=0$ and $\frac{\partial^2 \psi }{\partial y_j^2}(0)=a_j$, $j=1,2 ,..., d-1$,
 with $a_j>0$, and $\frac{\partial^2 \psi}{\partial y_i \partial y_j}(0)=0$, for $i\neq j$. Also $|D^{\alpha } \psi  | \leq C_{\alpha}$, and $0<c\leq a_1\leq a_2\leq...\leq a_{d-1} \leq C$. We also have
\begin{equation}\label{Lip-est-of-grad}
K_1 |y'| \leq |\nabla \psi(y') | \leq K_2 |y'|,
\end{equation}
where $K_1$ and $K_2$ do not depend on $z$. Now choose $\delta>0$ so small that
\begin{itemize}
\item[(a)] $\delta< \frac{r}{1000}$ and $K_1 \delta <1$,
\item[(b)] $(\ref{Lip-est-of-grad})$ holds for $|y'| \leq \frac{K_1}{4K_2} \delta$,
\item[(c)] $\left| \frac{\partial^2 \psi}{\partial y_i  \partial y_j} \right|  \leq \frac{a_1}{1000d}$ for $i\neq j$ and
$\left| \frac{\partial^2 \psi }{\partial y_j^2} -a_j  \right| \leq \frac{a_j}{100}$ for $|y'|\leq 100 \delta$ when $j=1,2,...,d-1$,
\item[(d)] $|n'|\leq K_1 \delta $ implies that there exists a unique $|y'| \leq \delta$ so that $\nabla \psi(y') =n'$.
\end{itemize}

We remark that $\delta$ is a constant that does not depend on $z$. We have $\Gamma\subset \bigcup\limits_{z\in \Gamma} B(z,\frac
12 L \delta)$ where $L=\frac{K_1}{4 K_2 d}$, hence $\Gamma\subset \bigcup\limits_{k=1}^M B(z^k,\frac 12 L \delta)$ for some
$z^1,...,z^M \in \Gamma$. We take a partition of unity $\sum\limits_{k=1}^M \varphi_k =1$ on $\Gamma$, where
$\mathrm{supp} (\varphi_k) \subset B(z^k, L \delta) $, and $\varphi_k \in C^{\infty}$. Set $B_k=B(z^k,L \delta)$. Recall that
$\overline{g}(x)$ is the average of $g$ on the unit torus with respect to its periodic variable, also denote $g_{\e}(x):=g(x,x / \e)$.

We have
$$
u_{\e}(x)-u_0(x)=\sum\limits_{k=1}^M \int\limits_{\Gamma} P(x,y) [ g_{\e}(y) - \overline{g}(y) ] \varphi_k(y) d\sigma(y) := \sum\limits_{k=1}^M I_k,
$$
where
$$
I_k :=\int\limits_{\Gamma} P(x,y) [ g_{\e}(y) - \overline{g}(y) ] \varphi_k(y) d\sigma(y).
$$

\noindent \textbf{Step 2. Reduction to volume integrals.} Set $z=y-z^k$, then
$$
I_k=\int\limits_{(\Gamma \cap B_k)-z^k} P(x,z^k+z)  [ g_{\e} -\overline{g}](z^k+z) \varphi_k(z^k+z) d\sigma(z).
$$
We have $(\Gamma \cap B_k)-z^k =(\Gamma-z^k) \cap B(0,L \delta)$. By setting $y=\cR z$ we obtain

$$
I_k=\int\limits_{ \cR (\Gamma-z^k) \cap B(0, L\delta) } P(x, z^k+\cR^{-1}y) [ g_{\e}-\overline{g} ](z^k + \cR^{-1} y ) \varphi_k(z^k+\cR^{-1}y) d \sigma(y).
$$
By $(\ref{def-orth-transform})$ and (a) we may assume that
$$
\cR (\Gamma-z^k) \cap B(0, L\delta) =\{ (y', \psi(y')): \ |y'|<100 \delta \} \cap B(0,L \delta),
$$
and hence
$$
I_k=\int\limits_{|y'|<L \delta} P(x, z^k +\cR^{-1}(y',  \psi(y') )  ) [ g_{\e} - \overline{g} ](z^k + \cR^{-1}(y',\psi(y')) ) \cdot
$$
$$
\varphi_k(z^k  +\cR^{-1} (y',  \psi(y') )  ) ( 1+|\nabla \psi(y') |^2  )^{1/2} dy'.
$$

\noindent \textbf{Step 3. Reduction to oscillatory integrals.}  Since $g$ is one periodic in its second variable and sufficiently smooth, we have
$$
g(x,y)=\sum\limits_{m\in \Z^d} c_m(x) \Ex(  m\cdot y ) ,
$$
and hence
$$
g_{\e}(x)=\sum\limits_{m\in \Z^d} c_m(x) \Ex \left( \frac{m}{\e}\cdot x \right),
$$
where $c_m:\Gamma \rightarrow  \mathbb{C}^N$ for each $m\in \Z^d$. Using this and orthogonality of $\cR$ we have
\begin{multline}\label{exp of g}
[g_{\e} -\overline{g}](z^k+\cR^{-1}(y', \psi (y') ) ) = \\
\sum\limits_{m\neq 0} c_m (z^k+\cR^{-1}(y', \psi (y') ) ) \Ex \left( \frac{m}{\e}    \cdot z^k \right)
\Ex \left[ \frac {1}{\e} <\cR m, (y',\psi(y'))>   \right],
\end{multline}
where $<\cdot,\cdot>$ denotes the usual scalar product. By setting $n:=\cR m$ and $n=|n|(n', n_d)$
with $|(n',  n_d)|=1$ from $(\ref{exp of g})$ we obtain
\begin{multline*}
[g_{\e} -\overline{g}](z^k+\cR^{-1}(y', \psi (y') ))=  \\
\sum\limits_{m\neq 0} c_m ( z^k+\cR^{-1}(y', \psi (y') ) ) \Ex \left( \frac{m}{\e} \cdot z^k  \right) \Ex[  \lambda F(y') ],
\end{multline*}
where
$$
F(y')=n' \cdot y' + n_d \psi(y'),
$$
and $\lambda: =\frac{ |n| }{\e}=\frac{|m|}{\e}$ by orthogonality of $\cR$. Next, by setting
$$ \Phi_k(y')= \varphi_k(z^k+\cR^{-1}(y',\psi(y')) ) (1+ |\nabla \psi (y')|^2 )^{1/2}, $$
and
$$
I_{k,m}=\int\limits_{|y'|<L \delta}  c_m(  z^k + \cR^{-1}(y', \psi(y')) )  P(x,  z^k + \cR^{-1}(y', \psi(y')) )
  \Phi_k(y') \Ex [ \lambda F(y') ] d y',
$$
we obtain
$$
|I_k|\leq \sum\limits_{m\neq 0} |I_{k,m}|.
$$

\noindent \textbf{Step 4. Decay of $I_k$.} We split the study of decay of the integrals $I_{k,m}$ into two cases.

\noindent \textbf{Case 1.} $|n'|\geq K_1 \delta/2$.

We have $\nabla F(y')=n'+n_d \nabla \psi(y')$. Then $|n_j'| \geq K_1 \delta/2d $ for some $1\leq j\leq d-1$, hence by
$(\ref{Lip-est-of-grad})$ on $\supp (\Phi_k)$ we have
\begin{equation}\label{case1-est of phase from below}
\left| \frac{\partial F}{\partial y_j} \right| \geq \frac{K_1 \delta}{2d} -K_2|y'| \geq \frac{K_1 \delta}{2d}- K_2 L
\delta =\frac{K_1 \delta}{2d} - \frac{K_1 \delta}{4d}  \geq \frac{K_1 \delta}{4d}.
\end{equation}
Now integrating by parts in $I_{k,m}$ in the $j$-th coordinate twice, by virtue of $(\ref{case1-est of phase from below})$ and
Lemma \ref{Lem-Poisson-est} for all $x\in D$ we conclude
\begin{equation}\label{case1-est of I_km0}
|I_{k,m}(x)| \leq C \lambda^{-2} \int\limits_{|y'| \leq L \delta} \frac{ \left[ |c_m| + \left| \frac{\partial c_m}{\partial y_j}
\right| +\left| \frac{\partial^2 c_m}{\partial y_j^2} \right| \right]  ( z^k + \cR^{-1}(y', \psi(y') )  )  }{ |x -z^k - \cR^{-1}(y', \psi(y') ) |^{d-1+2} } dy'.
\end{equation}
Recall that $d(x)$ is the distance of $x\in D$ from the boundary of $D$, and set
\begin{equation}\label{DEF-D-eps}
D_{\e}=\{x\in D: \ d(x) \geq  \e \}.
\end{equation}
Now observe that
\begin{equation}\label{log}
\int\limits_{ D_{\e} } \frac{1}{ |x -z^k - \cR^{-1}(y', \psi(y') ) |^{d+1} } dx  \leq C \int\limits_{|w| \geq \e}
\frac{dw}{|w|^{d+1}}\leq
C \int\limits_{\e}^C \frac{r^{d-1}}{r^{d+1}} dr = \frac{C}{ \e}.
\end{equation}
Combining this and $(\ref{case1-est of I_km0})$ we obtain
$$
\int\limits_{D_{\e}}|I_{k,m}(x)| dx \leq C \lambda^{-2} \e^{-1} \int\limits_{|y'|\leq L \delta} \left[ |c_m| + \left|
 \frac{\partial c_m}{\partial y_j} \right| +\left| \frac{\partial^2 c_m}{\partial y_j^2} \right| \right]  ( z^k + \cR^{-1}(y', \psi(y') )  ) dy'.
$$
Now taking into account the smoothness properties of $g$ and applying Lemma \ref{Lem-finite-sum-of-coeff} to $g$ and to its
derivatives to sum up $c_m$ and its derivatives, from the last estimate we obtain
\begin{equation}\label{case1-est of I-km}
\sum\limits_{m\neq 0} \|I_{k,m}\|_{L^1(D_{\e})} \leq C \e.
\end{equation}

\noindent \textbf{Case 2.} $|n'|<K_1\delta /2$.

Since $\delta>0$ is small and $|(n',n_d)|=1$ we have $|n_d|> 1/2$ and hence
$$
\left| \frac{n'}{n_d} \right|< \frac{K_1 \delta}{2 \frac 12 } =K_1 \delta.
$$
By (d) there exists a unique $\widetilde{y'}$ with $|\widetilde{y'}|\leq \delta$ and $\nabla \psi (\widetilde{y'}) = -\frac{n'}{n_d}$.
Clearly $\nabla F(\widetilde{y'})=0$, and using (c) we arrive at
$$
\left| \frac{\partial}{\partial y_j} \left( \frac{\partial F}{\partial y_j}  \right) \right| =
\left| n_d \frac{\partial^2 \psi}{\partial y_j^2} \right|
 \geq \frac 12 \frac {99}{100} a_1.
$$
From the latter it follows that
$$
\left| \frac{\partial F}{\partial y_j} ( \widetilde{y'} + s e_j' )  \right| \geq \frac{99}{200} a_1 |s|, \qquad |s|\leq 4\delta,
$$
where $s$ is scalar and $e_j'$ is the $j$-th unit vector of $\R^{d-1}$. By (c) for $i\neq j$ we have
$$
\left| \frac{\partial }{\partial y_i} \left( \frac{\partial F}{\partial y_j} \right)  \right|=\left| n_d
\frac{\partial^2 \psi}{ \partial y_i \partial y_j } \right| \leq \frac{a_1}{1000 d},
$$
from which we obtain
\begin{equation}\label{case2-est-of-deriv-F}
\left| \frac{\partial F}{\partial z_j}(\widetilde{y'} +z') \right|\geq c |z_j|,
\end{equation}
for $z'\in \cC_j$ and $\widetilde{y'}+z' \in \supp (\Phi_k)$ where
$$
\cC_j=\{ z'\in \R^{d-1}: \ |z_j' |\geq \frac{1}{2 \sqrt{d-1} } |z'| \}, \qquad 1,2,... ,d-1.
$$
Clearly the cones $\cC_j$ cover $\R^{d-1}$. For $j=1,2,...,d-1$ there exists $\omega_j$ supported in $\cC_j$, smooth away
from the origin and homogenous of degree 0 such that
$$
\sum\limits_{j=1}^{d-1} \omega_j(z') =1, \qquad \forall z'\neq 0.
$$
Now fix a nonnegative function $h\in C^{\infty}(\R^{d-1})$ such that $h(y')=0$ for $|y'|\geq 2$ and $h(y')=1$ for $|y'|\leq 1$.
Setting $y'=\widetilde{y'}+z'$ and $z^{\ast}:=z^k+\cR^{-1}( \widetilde{y'}+z', \psi(\widetilde{y'}+z') )$ we obtain
$$
I_{k,m}=\int\limits_{|\widetilde{y'}+z'|<L\delta} c_m(z^{\ast}) P(x, z^{\ast}  )
 \Phi_k(\widetilde{y'}+z') \Ex [ \lambda F(\widetilde{y'}+z' ) ] dz'.
$$

Set
$$
I_{k,m}^1= \int\limits_{|\widetilde{y'}+z'|<L\delta} h(\e^{-1/2} z' ) c_m(z^{\ast}) P(x, z^{\ast} )
 \Phi_k (\widetilde{y'}+z' ) \Ex [ \lambda F(\widetilde{y'}+z' ) ] dz',
$$
and
$$
I_{k,m}^2= \int\limits_{|\widetilde{y'}+z'|<L\delta} (1-h(\e^{-1/2} z' ) ) c_m(z^{\ast}) P(x,z^{\ast} )  \Phi_k (\widetilde{y'}+z' )
\Ex [ \lambda F(\widetilde{y'}+z' ) ] dz' ,
$$
so that $I_{k,m}=I_{k,m}^1+I_{k,m}^2$. It follows from Lemma \ref{Lem-Poisson-est} that
$$
\int\limits_{D_{\e}}|P(x,y)| dx \leq C \int\limits_{D_{\e}} \frac{dx}{|x-y|^{d-1}} \leq C,
$$
uniformly with respect to $y\in \Gamma$ and $\e>0$,
which together with the smoothness condition on $g$ and Lemma \ref{Lem-finite-sum-of-coeff} gives
$$
\sum\limits_{m\neq 0}  \int\limits_{D_{\e}} |I_{k,m}^1(x)| dx \leq C   \sum\limits_{m\neq 0} \int\limits_{|z'|\leq 2 \e^{1/2} }
   |c_m(z^{\ast})| dz' \leq C \e^{(d-1)/2} .
$$
For the second part we have $I_{k,m}^2= \sum\limits_{j=1}^{d-1} I_{k,m}^{2,j} $ where
$$
I_{k,m}^{2,j} = \int\limits \omega_j(z') (1-h(\e^{-1/2} z')) c_m(z^{\ast}) P(x,z^{\ast} )  \Phi_k (\widetilde{y'}+z')
\Ex [\lambda F(\widetilde{y'}+z') ] dz'.
$$
Now integrating by parts with respect to $z_j$ in $I_{k,m}^{2,j}$ twice we obtain
\begin{multline}\label{case2-est1-I-km2}
|I_{k,m}^{2,j}(x)| \leq  C \lambda^{-2}  \cdot \\ \int\limits \left| \frac{\partial }{ \partial z_j } \left\{  \frac{1}{  \frac{\partial F}{ \partial z_j } }
\frac{\partial }{ \partial z_j }  [ \frac{1}{  \frac{\partial F}{ \partial z_j } }
 \omega_j(z') (1-h(\e^{-1/2} z')) c_m(z^{\ast}) P(x, z^{\ast})  \Phi_k ] \right\} \right| dz'
\end{multline}

Observe that since $\omega_j$ is homogeneous of degree 0, for each $j=1,2,...,d-1$ and small $|z'|$ we have
$$
\left| \frac{\partial^k \omega_j }{\partial z_j^k} (z')  \right| \leq C \frac{1}{|z'|^k}, \qquad k=1,2, ...
$$
Using this, $(\ref{case2-est-of-deriv-F})$, $(\ref{case2-est1-I-km2})$, Lemma \ref{Lem-Poisson-est}, and applying
Lemma \ref{Lem-finite-sum-of-coeff} we obtain
$$
\sum\limits_{m\neq 0}| I_{k,m}^{2,j} (x)| \leq C \e^2  \sum\limits_{k=1}^6 A_k(x),
$$
where
$$
A_1(x)=\int\limits_{ \e^{1/2} \leq |z'| \leq C } \frac{1 }{|z'|^4} \frac{1}{ |x-z^{\ast} |^{d-1} } dz',
$$

$$
A_2(x)=\e^{-1/2} \int\limits_{ \e^{1/2} \leq |z'| \leq 2 \e^{1/2}  } \frac{1}{|z'|^3} \frac{1}{ |x- z^{\ast} |^{d-1} }  dz',
$$

$$
A_3(x)=\int\limits_{\e^{1/2}  \leq |z'| \leq C } \frac{1}{|z'|^3} \frac{1}{ |x-z^{\ast} |^d }  dz',
$$

$$
A_4(x)= \e^{-1} \int\limits_{\e^{1/2}  \leq |z'| \leq 2 \e^{1/2} } \frac{1}{|z'|^2} \frac{1}{ |x-z^{\ast} |^{d-1} }  dz',
$$

$$
A_5(x)= \e^{-1/2} \int\limits_{\e^{1/2}  \leq |z'| \leq 2 \e^{1/2} } \frac{1}{|z'|^2} \frac{1}{ |x-z^{\ast} |^d }  dz',
$$
and
$$
A_6(x)=  \int\limits_{\e^{1/2}  \leq |z'| \leq C } \frac{1}{|z'|^2} \frac{1}{ |x-z^{\ast} |^{d+1} }  dz'.
$$

An easy calculation shows that
$$
  \int\limits_{D_{\e}} \sum\limits_{k=1}^6 A_k(x) dx \leq C \begin{cases}     \e^{-3/2}  ,&\text{  $d= 2$, }  \\
\e^{-1} | \ln \e | ,&\text{  $d= 3$, } \\     \e^{-1} ,&\text{  $d\geq 4$},  \end{cases}
$$
where we used Fubini's theorem to change the volume and surface integrations. Using this we obtain
$$
\sum\limits_{m\neq 0}  \int\limits_{D_{\e}} |I_{k,m}^{2,j}(x)| dx \leq  C \begin{cases}  \e^{1/2}  ,&\text{ $d= 2$, } \\
\e |\ln \e|  , &  \text{ $d= 3$, } \\   \e ,&\text{ $d\geq 4$.}   \end{cases}
$$
Combining together the estimates for $I_{k,m}^{1,j}$ and $I_{k,m}^{2,j}$, and using $(\ref{case1-est of I-km})$ we arrive at
\begin{equation}\label{est-I-k on D-eps}
\int\limits_{D_{\e}} |I_k(x)| dx \leq C \begin{cases} \e^{1/2}  ,&\text{ $d= 2$, } \\
\e |\ln \e|  , &  \text{ $d= 3$, } \\   \e ,&\text{ $d\geq 4$.}   \end{cases}
\end{equation}

\noindent \textbf{Step 5. $L^p$ estimates.} By virtue of (\ref{P-int-is-unif-bdd})
we have $\sup\limits_{\e>0} \|u_{\e}-u_0\|_{L^{\infty}(D)} <\infty$ and hence
\begin{equation}\label{est-Dirichlet-small-strip}
\int\limits_{D\setminus D_{\e}}|u_{\e}(x) -u_0(x)|dx \leq C \e,
\end{equation}
which combined with (\ref{est-I-k on D-eps}) gives the claim when $p=1$.

Now for $1<p<\infty$ using the boundedness of $u_{\e}-u_0$ we obtain
$$
\int\limits_{D} |u_{\e} -u_0|^p dx \leq C \int\limits_{D} |u_{\e} -u_0|dx =C \| u_{\e} -u_0\|_{L^1(D)},
$$
hence
\begin{equation}\label{est-L-p norm by L-1}
\| u_{\e} -u_0\|_{L^p(D)} \leq C \| u_{\e} -u_0\|_{L^1(D)}^{1/p}.
\end{equation}

Theorem \ref{Thm-L-p} is proved. $\square$

$\newline$

\section{Optimality: proof of Theorem \ref{Thm-Optimality}.}
Throughout this section instead of systems we will consider equations, so the operator $\mathcal{L}$ is considered only in the case $N=1$.
We begin with a simple lemma.

\begin{lem}(Concentration near the boundary) \label{Lem-boundary values}
Let $u$ be the solution to the Dirichlet problem for the operator $\mathcal{L}$ in the  domain $D$ with boundary data $g:\R^d \rightarrow \mathbb{C} $
which is Lipschitz with constant $Lip(g)$.

Then there exist constants $C_1$, $C_2$ depending on dimension, domain, operator,
but independent of $g$, so that for any $x\in D$, $\xi \in \Gamma$,
and small enough $\delta>0$ one has
$$
|u(x) - g (\xi)| \leq  C_1\delta Lip(g)  + \frac 18 \| g \|_{\infty}  ,
$$
provided $|x-\xi| \leq C_2 \delta $.
\end{lem}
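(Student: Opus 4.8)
\emph{Proof plan.} The plan is to start from the Poisson representation $u(x)=\int_\Gamma P(x,y)g(y)\,d\sigma(y)$. Since $N=1$, the constant function $v\equiv 1$ solves $\mathcal{L}v=0$ with boundary data $1$, and the Poisson kernel reproduces solutions, so $\int_\Gamma P(x,y)\,d\sigma(y)=1$ for every $x\in D$; subtracting $g(\xi)$ times this identity gives
$$
u(x)-g(\xi)=\int_\Gamma P(x,y)\bigl(g(y)-g(\xi)\bigr)\,d\sigma(y),\qquad x\in D,\ \xi\in\Gamma .
$$
I would then fix a large absolute constant $K$, to be pinned down at the very end, set $\rho:=K\delta$, and split $\Gamma$ into the near part $\Gamma\cap B(\xi,\rho)$ and the far part $\Gamma\setminus B(\xi,\rho)$.

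On the near part I would use the Lipschitz bound $|g(y)-g(\xi)|\le Lip(g)\,|y-\xi|\le Lip(g)\,\rho$ together with the uniform estimate $\int_\Gamma|P(x,y)|\,d\sigma(y)\le C$ of the Claim, i.e. $(\ref{P-int-is-unif-bdd})$. This bounds the near contribution by $C\,Lip(g)\,\rho=CK\,\delta\,Lip(g)$, which is the first term in the statement once we set $C_1:=CK$.

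On the far part one only has $|g(y)-g(\xi)|\le 2\|g\|_\infty$, so everything reduces to showing that $\int_{\Gamma\setminus B(\xi,\rho)}|P(x,y)|\,d\sigma(y)$ is a small multiple of $1$. This is the one step that is not pure bookkeeping, and it is where the distance form of the Poisson estimate, $|P(x,y)|\le C_0\,d(x)/|x-y|^d$ from $(\ref{Poisson-dist-est})$, is indispensable: the plain size bound $|P(x,y)|\le C|x-y|^{1-d}$ of $(\ref{Poisson-deriv-est})$ would only produce a useless factor $\log(1/\rho)$, whereas the extra weight $d(x)$ is exactly what beats it. Indeed $d(x)\le|x-\xi|\le C_2\delta$, and for $|y-\xi|>\rho\ge 2C_2\delta$ the triangle inequality gives $|x-y|\ge\tfrac12|y-\xi|$; integrating $|y-\xi|^{-d}$ over $\{y\in\Gamma:|y-\xi|>\rho\}$ — using that the $\sigma$-measure of $\Gamma\cap B(\xi,t)$ is at most a constant times $t^{d-1}$ for all $t>0$, followed by an integration by parts in $t$ — yields a bound of order $1/\rho$. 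Hence $\int_{\Gamma\setminus B(\xi,\rho)}|P(x,y)|\,d\sigma(y)\le C'\,d(x)/\rho\le C'C_2/K$, so the far contribution is at most $2C'C_2\|g\|_\infty/K$, which is $\le\tfrac18\|g\|_\infty$ as soon as $K\ge 16C'C_2$.

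It remains to collect constants. Take $C_2$ to be any fixed positive number (a smaller $C_2$ only shrinks the admissible set of pairs $(x,\xi)$), set $K:=\max\{2C_2,\,16C'C_2\}$ so that both $\rho\ge 2C_2\delta$ and $K\ge 16C'C_2$ hold, and put $C_1:=CK$; since $C$, $C_0$, $C'$ depend only on $d$, $D$ and $\mathcal{L}$, so do $C_1$ and $C_2$. The argument also requires $\delta$ (hence $\rho$) small enough that the local graph description of $\Gamma$ near $\xi$ and the Poisson kernel estimates of Lemma \ref{Lem-Poisson-est} are available, which is the ``small enough $\delta$'' of the statement. I expect the main (indeed only) obstacle to be the far-part tail estimate described above; the rest is routine splitting and integration.
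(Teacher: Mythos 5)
Your proposal is correct and follows essentially the same route as the paper: Poisson representation plus the reproducing property $\int_\Gamma P(x,y)\,d\sigma(y)=1$, a near/far split of $\Gamma$ at scale $\sim\delta$, the Lipschitz bound with $(\ref{P-int-is-unif-bdd})$ on the near part, and the distance-weighted estimate $(\ref{Poisson-dist-est})$ with the tail bound $\int_{|y-\xi|\ge\delta}|x-y|^{-d}\,d\sigma(y)\le C/\delta$ on the far part. The only (immaterial) difference is parametric: you enlarge the near ball to radius $K\delta$ with $K$ large, whereas the paper keeps radius $\delta$ and instead shrinks the constant $C_2$ in the requirement $|x-\xi|\le C_2\delta$.
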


\begin{proof}
By the Poisson representation we have
$$
u(x)=\int\limits_{  \Gamma } P(x,y) g(y) d\sigma(y), \qquad x\in D.
$$

Fix $\xi \in \Gamma$ and $x\in D$. If $|x- \xi| \leq \delta/2$, and $|\xi - y|>\delta$
where $ y \in \Gamma$, then clearly $|x-y|>\delta/2$.
Using this, (\ref{P-int-is-unif-bdd}), the second estimate of Lemma \ref{Lem-Poisson-est},
and the fact that the Poisson kernel has integral equal to
one over the boundary $\Gamma$, we obtain
\begin{multline}
|u(x)-g(\xi)|=| \int\limits_{y \in \Gamma} P(x,y) [ g(y)-g(\xi) ] d\sigma(y) | \leq \\
\int\limits_{|y-\xi|<\delta  } |P(x,y)| |g(y) -g(\xi) | d\sigma(y) +
\int\limits_{|y-\xi|\geq \delta  } |P(x,y)| |g(y) -g(\xi) | d\sigma(y) \leq \\
C \delta Lip(g)  +C \|g\|_{\infty}   d(x) \int\limits_{|y -\xi | \geq \delta } \frac{d \sigma(y)}{|x-y|^d} ,
\end{multline}
where the constant $C$ is determined by the Poisson kernel. The last integral is estimated in a similar
way as we proved (\ref{P-int-is-unif-bdd}), and uniformly
with respect to $x$ we obtain
$$
\int\limits_{|y -\xi | \geq \delta } \frac{d \sigma(y)}{|x-y|^d} \leq C \frac{1}{\delta}.
$$
It is left to take $x$ so that $|x- \xi|<C_2\delta$, where $C_2$ is a sufficiently small constant
independent of $x\in D$, $\xi \in \Gamma$ and $g$,
hence the claim.
\end{proof}

The next Lemma is essentially the Weyl's equidistribution theorem, in our case concerning
equidistribution of the scaled surfaces modulo one.
\begin{definition}
For $x, y \in \R^d$ we say that they are equal modulo one, and write $x  \equiv y \ ( \mathrm{mod} \ 1)$ if $x-y \in \Z^d$.
\end{definition}

If $x\in \R^d$, by $x \ \mathrm{mod} \ 1$ we denote the unique point $y$ in the unit torus of $\R^d$ which is equal to $x$ modulo one.

\begin{lem}(Equidistribution of scaled surfaces) \label{Lem-equidis-surface}
Suppose $\Gamma$ is a uniformly convex smooth hypersurface in $\R^d$ ($d\geq 2$). Then for any Riemann integrable function $g:\mathbb{T}^d \rightarrow \R$ one has
\begin{equation}\label{equdis-integral}
\int\limits_{\mathbb{T}^d} g(x) dx = \lim\limits_{\lambda \rightarrow \infty} \frac{1 }{ \mathcal{H}_{d-1} (\Gamma) }   \int\limits_{\Gamma} g(\lambda y) d\sigma(y),
\end{equation}
where $\mathcal{H}_{d-1}$ denotes $(d-1)$-dimensional Hausdorff measure.
\end{lem}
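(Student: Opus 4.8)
The plan is to establish (\ref{equdis-integral}) first for pure exponentials and then bootstrap to arbitrary Riemann integrable $g$ by monotone approximation. It is convenient to introduce the averaging functionals
$$
T_\lambda(g):=\frac{1}{\mathcal{H}_{d-1}(\Gamma)}\int_\Gamma g(\lambda y)\,d\sigma(y),\qquad \lambda>0,
$$
where for $g$ defined on $\mathbb{T}^d$ the expression $g(\lambda y)$ is read as $g$ evaluated at $\lambda y\bmod 1$. Each $T_\lambda$ is linear and positive, and $T_\lambda(1)=1$; the claim is precisely that $T_\lambda(g)\to\int_{\mathbb{T}^d}g$ as $\lambda\to\infty$.

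First I would check the case $g(x)=\Ex(m\cdot x)$, $m\in\Z^d$. For $m=0$ both sides of (\ref{equdis-integral}) equal $1$. For $m\neq 0$ the left-hand side vanishes, so one needs
$$
T_\lambda(\Ex(m\cdot\,))=\frac{1}{\mathcal{H}_{d-1}(\Gamma)}\int_\Gamma \Ex(\lambda m\cdot y)\,d\sigma(y)\longrightarrow 0\qquad(\lambda\to\infty).
$$
This is exactly the decay at infinity of the Fourier transform of the surface-carried measure $d\sigma$ on $\Gamma$. Since $\Gamma$ is smooth and uniformly convex, its Gaussian curvature is bounded away from zero, the Gauss map is a diffeomorphism onto $S^{d-1}$, and for any $\xi\neq 0$ the phase $y\mapsto\xi\cdot y$ restricted to $\Gamma$ has precisely two critical points — the preimages of $\pm\xi/|\xi|$ — both nondegenerate. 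The classical stationary phase estimate then gives
$$
\Bigl|\int_\Gamma e^{i\xi\cdot y}\,d\sigma(y)\Bigr|\le C\,|\xi|^{-(d-1)/2},\qquad \xi\in\R^d,
$$
with $C=C(\Gamma,d)$; taking $\xi=2\pi\lambda m$ yields the required decay. Hence (\ref{equdis-integral}) holds for every exponential and, by linearity, for every trigonometric polynomial.

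Next I would pass to continuous $g$: by the Stone--Weierstrass (or Fej\'er) theorem any $g\in C(\mathbb{T}^d)$ is a uniform limit of trigonometric polynomials, and since $|T_\lambda(h)|\le\|h\|_\infty$ and $T_\lambda(1)=1$, approximating $g$ within $\e$ in sup norm by a trigonometric polynomial $P$ and invoking the previous step for $P$ shows $T_\lambda(g)\to\int_{\mathbb{T}^d}g$. Finally, for Riemann integrable $g$ I would use its boundedness and a.e.-continuity to sandwich it — via Darboux step functions followed by an obvious one-sided smoothing — between continuous functions $\phi_1\le g\le\phi_2$ on $\mathbb{T}^d$ with $\int_{\mathbb{T}^d}(\phi_2-\phi_1)<\e$. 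Positivity of $T_\lambda$ gives $T_\lambda(\phi_1)\le T_\lambda(g)\le T_\lambda(\phi_2)$ for all $\lambda$, and the continuous case ($T_\lambda(\phi_i)\to\int_{\mathbb{T}^d}\phi_i$) yields
$$
\int_{\mathbb{T}^d}\phi_1\le\liminf_{\lambda\to\infty}T_\lambda(g)\le\limsup_{\lambda\to\infty}T_\lambda(g)\le\int_{\mathbb{T}^d}\phi_2;
$$
since the two outer integrals differ from $\int_{\mathbb{T}^d}g$ by at most $\e$, letting $\e\to 0$ gives (\ref{equdis-integral}).

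The only genuinely analytic ingredient, and the step where I expect the real (though classical) work to lie, is the surface-measure decay bound in the exponential case; this is where uniform convexity is essential, since a flat face of $\Gamma$ would destroy the decay of $\widehat{d\sigma}$ in its normal direction and equidistribution would fail there. The reduction to exponentials and the Riemann-integrable sandwiching are standard and routine.
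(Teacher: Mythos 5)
Your proposal is correct and follows essentially the same route as the paper: the key ingredient in both is the stationary-phase decay $|\widehat{\sigma}(\xi)|\le C|\xi|^{-(d-1)/2}$ of the surface measure (where uniform convexity enters), applied to the nonzero Fourier modes, followed by a sandwiching argument to pass to general Riemann integrable functions. The only (immaterial) difference is that you go through trigonometric polynomials and Stone--Weierstrass for continuous $g$, while the paper works directly with the absolutely convergent Fourier series of a smooth $g$ and then sandwiches characteristic functions of rectangles by smooth functions.
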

\begin{proof}
We first prove the Lemma for smooth functions. Suppose $g\in C^{\infty}(\R^d)$ and is one periodic. Then
$$
g(x)=\sum\limits_{m\in \Z^d} c_m \Ex ( m\cdot x ), \qquad x\in \mathbb{T}^d,
$$
which converges absolutely. Plugging this expansion into (\ref{equdis-integral}) we see that it
 is enough to prove that
$$
a_{\lambda}:= \sum\limits_{m\neq 0} c_m \int\limits_{\Gamma} \Ex ( \lambda m\cdot y ) d\sigma(y)
$$
converges to 0, as $\lambda \rightarrow \infty$. Denote by $\widehat{\sigma}(\xi)$ the Fourier transform of
the surface measure $\sigma$.
The following estimate is well-known (see \cite{Stein}, chapter VIII, Theorem 1)
$$
|\widehat{\sigma}(\xi)| \leq C |\xi|^{-(d-1)/2}.
$$
Using this estimate we obtain
$$
|a_{\lambda}| \leq \sum\limits_{m\neq 0} |c_m| |\widehat{\sigma}( \lambda m )| \leq C \lambda^{-(d-1)/2}  \sum\limits_{m\neq 0} \frac{|c_m| }{ \| m \|^{(d-1)/2}}.
$$
The last sum converges due to smoothness of $h$, and thus we get the claim for smooth functions.

Now if $g$ is a characteristic function of some rectangle in the unit torus, then it is easy to see that there exist a sequence of smooth
functions $f_n$ and $F_n$, $n=1,2,...$ so that
\begin{itemize}
\item[1.]$ f_n(x)\leq g(x) \leq F_n(x), \qquad x\in \R^d,$
\item[2.] $\lim\limits_{n \rightarrow \infty} \int\limits_{\mathbb{T}^d} [F_n(x)-f_n(x)] dx=0$,
\end{itemize}
from which it follows that (\ref{equdis-integral}) holds true for characteristic functions of rectangles. Clearly it will hold true also
for their linear combinations, i.e. step-functions. Now observe that when $g$ is Riemann integrable function, then the same pointwise bounds
from above and below hold true by means of step-functions, hence the statement
\end{proof}

Applying Lemma \ref{Lem-equidis-surface} to characteristic functions we obtain the following result.
\begin{cor}\label{cor-equidist}
Let $\Gamma $ be as above, and $A\subset \mathbb{T}^d$  be a ball. Then
$$
\mu(A)=\lim\limits_{\lambda \rightarrow \infty}
\frac{  \mathcal{H}_{d-1}  \{ y\in \Gamma: \ \lambda y \ \mathrm{mod} \ 1 \in A \}  }{ \mathcal{H}_{d-1}(\Gamma) },
$$
where $\mu$ denotes the Lebesgue measure in $\R^d$.
\end{cor}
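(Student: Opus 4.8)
The plan is to obtain the corollary as an immediate specialization of Lemma \ref{Lem-equidis-surface}, taking for $g$ the characteristic function $\chi_A$ of the ball $A\subset\mathbb{T}^d$. The only hypothesis of the lemma that needs to be checked for this choice is that $\chi_A$ is Riemann integrable on $\mathbb{T}^d$; once that is in place, both sides of \eqref{equdis-integral} are literally the two sides of the assertion.

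First I would dispose of the integrability point. The boundary of a (metric) ball in $\mathbb{T}^d$ is contained in the image of a Euclidean sphere of dimension $d-1$, and such a sphere has $d$-dimensional Lebesgue measure zero; since $\chi_A$ is bounded and its set of discontinuities is precisely $\partial A$, the Lebesgue criterion for Riemann integrability applies. This is exactly why the preceding lemma was phrased for real-valued Riemann integrable $g$ and not merely for complex exponentials — no new oscillatory-integral or stationary-phase input is needed here.

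Next I would unwind both sides of \eqref{equdis-integral} for $g=\chi_A$. On the left, $\int_{\mathbb{T}^d}\chi_A(x)\,dx=\mu(A)$ by the definition of Lebesgue measure. On the right, recalling the convention that $g(\lambda y)$ abbreviates $g$ evaluated at the torus point $\lambda y\ \mathrm{mod}\ 1$, we have $\chi_A(\lambda y)=1$ exactly when $\lambda y\ \mathrm{mod}\ 1\in A$ and $\chi_A(\lambda y)=0$ otherwise, so that
$$
\int_\Gamma \chi_A(\lambda y)\,d\sigma(y)=\mathcal{H}_{d-1}\{\,y\in\Gamma:\ \lambda y\ \mathrm{mod}\ 1\in A\,\}.
$$
Dividing through by $\mathcal{H}_{d-1}(\Gamma)$ and letting $\lambda\to\infty$ yields the claimed identity. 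The only thing that actually has to be verified — hence the main step, mild as it is — is the measure-zero boundary observation securing Riemann integrability of $\chi_A$; the remainder is bookkeeping with the definition of the scaled-surface average, and there is no substantive obstacle.
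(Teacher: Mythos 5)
Your argument is correct and is exactly the paper's route: the corollary is obtained by applying Lemma \ref{Lem-equidis-surface} to the characteristic function $\chi_A$, whose Riemann integrability (the boundary of the ball being a Lebesgue-null set) is the only point needing verification.
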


\noindent Now we are ready to complete the proof of Theorem \ref{Thm-Optimality}.

\noindent \textbf{Proof of Theorem \ref{Thm-Optimality}.}
Without loss of generality we may assume that the boundary data $g$ has mean value 0, and hence $u_0=0$.

By the Poisson representation we have
\begin{equation}\label{optim-u_e}
u_\e(x)=\int\limits_{\Gamma } P(x,y) g(y / \e)  d\sigma(y), \qquad x\in D.
\end{equation}

If $g\equiv 0$, then we are done, otherwise set $E:=\{x\in \mathbb{T}^d: \ |g(x)|> 1/2 \|g\|_{\infty}  \}$. Clearly $E$ is an open set,
and by passing to a subset of positive measure, we may assume that $E$ is a ball.

Due to Corollary \ref{cor-equidist} there exists a constant $c_0>0$ so that for all $\e>0$ small enough one has
$$
\frac{1}{\mathcal{H}_{d-1}(\Gamma)} \mathcal{H}_{d-1} \{ y\in \Gamma: \ |g_{\e}(y)|>\frac 12 \|g\|_{\infty} \} > \frac 12 \mu(E).
$$

Now fix $y\in \Gamma$, so that $|g_{\e}(y)|>1/2 \|g\|_{\infty}$, and apply Lemma \ref{Lem-boundary values} with
$$
\delta=\frac{1}{8C_1} \frac{\| g \|_{\infty} }{Lip(g_\e)} = \frac{1}{8C_1}  \frac{\e  \| g  \|_{\infty}}{ Lip(g)} .
$$
We obtain
\begin{equation}\label{optim-u near the bdry}
|u_{\e}(x) - g_{\e}(y)  | < \|g\|_{\infty}/4 , \  \text{ if } x\in D,  \text{ and } |x-y|\leq C \e ,
\end{equation}
where the constant $C$ is independent of $\e$. Since $|g_{\e}(y)|>\|g\|_{\infty}/2$ on a fixed portion of
the boundary for all small enough
$\e>0$, inequality (\ref{optim-u near the bdry}) implies that on a fixed
portion of the strip $B_{\e}:=\{x\in D:  \ d(x)<C \e  \}$ one has
$|u_{\e}(x)|>\|g\|_{\infty}/4 $, where $x\in B_\e$.

Now for $1\leq p<\infty$ taking the $L^p$ norm of $u_{\e}$ only on that
 strip we obtain $\|u_{\e} \|_{L^p(B)} \geq C  \e^{1/p}  \|g\|_{\infty}$,
which proves the theorem. $\square$

$\newline$
We remark here that Theorem \ref{Thm-Optimality} gives sharp bounds on convergence rate of the
homogenization process in dimensions 4 and higher,
and nearly sharp in dimension 3. For $d=2$, and $p=1$ we give an example
 for which the convergence rate is exactly $\e^{1/2}$.

\noindent \textbf{Example (d=2).}
Let $B$ be the unit disc of $\R^2$, and $g(x_1,x_2)=\Ex (x_2 )$. Note that $g$ is one periodic and has mean value 0 in the unit torus.
Consider the following problem:
$$
\begin{cases}
  \Delta u_{\e}(x)=0, &\text{} x\in B,\\
  u_\e(x)=g(x/\e),  &\text{} x\in \partial B.
\end{cases}
$$

To estimate $u_\e$ on $B$ we proceed using the method of stationary phase (see e.g. see \cite{Stein}, chapter VIII). Let $P(x,y)$, where
$|x|<1$, $|y|=1$ be the Poisson kernel for the Laplace operator in $B$. We will consider $u_\e(x)$ only at the points $|x|<1/2$ where
$P(x,y)$ is a smooth function with bounded derivatives. Observe that the only critical points of the phase function $g$ are north and
south poles of the disc, i.e. $n_+:=(0,1)$ and $n_-:=(0,-1)$. It is also clear that these are non-degenerate critical points. Hence we can
invoke the principle of stationary phase (see \cite{Stein}, chapter VIII, Prop. 6) and obtain
$$
u_\e(x)=C \e^{1/2}  [P(x,n_+) e^{\frac{2\pi i}{\e} }+ P(x,n_-) e^{- \frac{2\pi i}{\e}  } ]+ O(\e^{3/2}),
$$
where $O(\e^{3/2})$ is uniform with respect to $|x|<1/2$. Now to see that the two terms in the parentheses do not cancel, it is enough to
restrict $x$ to $\{x=(x_1,x_2) \in B: \ |x|<1/2, \ 1/4<x_2<1/2 \}$. Considering $u_\e$ on this subset we see that $\|u_\e\|_{L^1(B)} \geq C \e^{1/2}$,
which proves that the convergence rate provided by Theorem \ref{Thm-Optimality} in the case $p=1$ and $d=2$ is sharp.

\section{Proof of Theorem \ref{Thm-L-p-oscillating}}

To prove Theorem \ref{Thm-L-p-oscillating},  we will use a recent result
due to Kenig-Lin-Shen \cite{KLS1} to reduce the setting of rapidly oscillating operators to the fixed operator
with oscillating Dirichlet condition, where our method can be applied. We start with some preliminaries.

For $y\in \Gamma$ set  $n(y)=(n_1(y),...,n_d(y))$ to be the unit outward normal to $\Gamma$ at the point $y$.
Let $\widehat{A}_{ij}^{\alpha \beta}$, $1\leq \alpha , \beta \leq d$, $1\leq i,j\leq N$ be the (constant) coefficient matrix
of the homogenized operators $\mathcal{L}_0$, and set $h(y)=(h_{ij}(y))_{N\times N}$ to be the inverse matrix
of $( \widehat{A}^{\alpha \beta } n_\alpha (y) n_\beta(y) )_{N\times N}$,
where $y\in \Gamma$. It is a classical fact that the operator $\mathcal{L}_0$ is elliptic in a sense of Section \ref{sec-Assump} (ii) (see \cite{BLP})
hence the definition of $h(y)$ is correct. Recall that $P_\gamma^k(x)=x_\gamma(0,...,1,0,...)$, with $1$ in the $k$-th position, where
$1\leq \gamma \leq d$, $1\leq k \leq N$, and $\mathcal{L}_\e^*$ is the
formal adjoint to $\mathcal{L}_\e$, that is the matrix of coefficients of $\mathcal{L}_\e^*$ is $A_{ji}^{\beta \alpha}$.
We introduce the matrix of Dirichlet correctors $\Phi_{\e, \gamma}^{*k}=( \Phi_{\e, \gamma}^{* 1k},..., \Phi_{\e, \gamma}^{* Nk} )$
for the operator $\mathcal{L}_\e^*$ in the domain $D$  defined by
\begin{equation}\label{Dir-corr}
\mathcal{L}_\e^* \Phi_{\e, \gamma}^{*k} (x) =0 , \ x \in D \qquad \text{ and } \qquad \Phi_{\e, \gamma}^{*k}(x) = P_\gamma^k(x) , \ x \in \Gamma.
\end{equation}

For $\e>0$ and $y\in \Gamma$ set
\begin{equation}\label{omega}
\omega_\e^{ij}(y)= h_{ik}(y) \cdot \frac{\partial }{\partial n(y)} \{ \Phi_{\e, \gamma}^{* l k} (y) \} \cdot n_\gamma(y) \cdot n_\alpha(y) n_\beta(y) A_{l j}^{\alpha \beta}(y/ \e).
\end{equation}

Also set $g_\e(x)=g(x,x / \e)$, where $x\in \Gamma$. We are now ready to formulate the result we will use from \cite{KLS1}.
\begin{theorem}\label{Thm-KLS}(Theorem 3.9, \cite{KLS1})
Let $d\geq 3$ and assumptions (i)-(iv) hold. Let also $\mathcal{L}_\e (u_\e) =0$ in $D$ and
$u_\e = g_\e $ on $\Gamma$. Then for any $1\leq p <\infty$ one has
$$
|| u_\e - v_\e ||_{L^p(D)} \leq C \{ \e ( \ln [ \e^{-1} M + 2 ] )^2 \}^{1/p} || g_\e  ||_{L^p(\Gamma)},
$$
where $\mathcal{L}_0 (v_\e)=0$ in $D$ and $v_\e= \omega_\e g_\e $ on $\Gamma$,
with $\omega_\e$ defined by (\ref{omega}), and $M$ is the diameter of $D$.
\end{theorem}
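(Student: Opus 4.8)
Since the statement is Theorem~3.9 of \cite{KLS1}, the plan is to follow the Kenig--Lin--Shen argument. After the rescaling $x\mapsto x/M$, which normalizes $\mathrm{diam}(D)\approx 1$ and replaces $\ln[\e^{-1}M+2]$ by $\ln(1/\e)+O(1)$, it suffices to bound $\|u_\e-v_\e\|_{L^p(D)}$ by $C\{\e(\ln(1/\e))^2\}^{1/p}\|g_\e\|_{L^p(\Gamma)}$ for small $\e$. The argument is by duality: for $1\le p<\infty$, $1/p+1/p'=1$, and $F\in L^{p'}(D)$, let $\phi_\e$ solve the adjoint Dirichlet problem $\mathcal{L}_\e^*\phi_\e=F$ in $D$, $\phi_\e=0$ on $\Gamma$, and let $\phi_0$ solve $\mathcal{L}_0^*\phi_0=F$ in $D$, $\phi_0=0$ on $\Gamma$. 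Applying Green's second identity to the pairs $(u_\e,\phi_\e)$ and $(v_\e,\phi_0)$, and using $\mathcal{L}_\e u_\e=0$, $\mathcal{L}_0v_\e=0$, $\phi_\e|_\Gamma=\phi_0|_\Gamma=0$, $u_\e|_\Gamma=g_\e$, $v_\e|_\Gamma=\omega_\e g_\e$, gives
$$
\int_D(u_\e-v_\e)\cdot F\,dx=\int_\Gamma g_\e\cdot\frac{\partial\phi_\e}{\partial\nu_\e^*}\,d\sigma-\int_\Gamma (\omega_\e g_\e)\cdot\frac{\partial\phi_0}{\partial\nu_0^*}\,d\sigma=\int_\Gamma g_\e\cdot\Big(\frac{\partial\phi_\e}{\partial\nu_\e^*}-\omega_\e^{T}\frac{\partial\phi_0}{\partial\nu_0^*}\Big)\,d\sigma ,
$$
where $\partial/\partial\nu_\e^*$ and $\partial/\partial\nu_0^*$ denote the conormal derivatives of $\mathcal{L}_\e^*$ and $\mathcal{L}_0^*$. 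By H\"older on $\Gamma$ and then taking the supremum over $\|F\|_{L^{p'}(D)}\le1$, the theorem follows once one establishes the flux estimate
$$
\Big\|\frac{\partial\phi_\e}{\partial\nu_\e^*}-\omega_\e^{T}\frac{\partial\phi_0}{\partial\nu_0^*}\Big\|_{L^{p'}(\Gamma)}\le C\{\e(\ln(1/\e))^2\}^{1/p}\,\|F\|_{L^{p'}(D)} .
$$

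The matrix $\omega_\e$ in (\ref{omega}) is dictated by this reduction. Near $\Gamma$ one expands $\phi_\e$ by the two-scale ansatz $\phi_\e(x)\approx\phi_0(x)+\big(\Phi_{\e,\gamma}^{*k}(x)-P_\gamma^k(x)\big)\,\partial_\gamma\phi_0^k(x)$ (summation over $\gamma,k$), with $\Phi_{\e,\gamma}^{*k}$ the Dirichlet corrector (\ref{Dir-corr}) for $\mathcal{L}_\e^*$; since $\Phi_{\e,\gamma}^{*k}=P_\gamma^k$ on $\Gamma$, the corrector term vanishes there, so the ansatz reproduces the boundary data of $\phi_\e$. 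Computing the $\mathcal{L}_\e^*$-conormal derivative of the ansatz on $\Gamma$ and using $\phi_0|_\Gamma=0$ --- hence $\nabla\phi_0=(\partial_n\phi_0)\,n$ on $\Gamma$, with $\partial_n\phi_0$ recovered from $\partial\phi_0/\partial\nu_0^*$ via $h=(\widehat{A}^{\alpha\beta}n_\alpha n_\beta)^{-1}$ --- the leading boundary term comes out exactly as $\omega_\e^{T}\,\partial\phi_0/\partial\nu_0^*$ with $\omega_\e$ as in (\ref{omega}): $h$ converts the conormal into a normal derivative, $n_\gamma$ selects the relevant component of $\nabla\phi_0$, $\partial_n\Phi_{\e,\gamma}^{*lk}$ is the corrector's normal derivative, and $n_\alpha n_\beta A_{lj}^{\alpha\beta}(y/\e)$ is the oscillating coefficient contracted against the normal. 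Everything else in $\partial\phi_\e/\partial\nu_\e^*$ is the remainder to be controlled.

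The flux estimate is the core of the proof and the main obstacle. Set $w_\e=\phi_\e-\phi_0-\big(\Phi_{\e,\gamma}^{*k}-P_\gamma^k\big)\,S_\e\!\big(\partial_\gamma\phi_0^k\big)$, where $S_\e$ is a mollification at scale $\e$ inserted to avoid differentiating $\phi_0$ once more; since all three boundary values agree, $w_\e\in H_0^1(D)$. Using the skew-symmetric flux (dual) corrector to rewrite the coefficient oscillations in divergence form, one finds $\mathcal{L}_\e^*w_\e=\mathrm{div}(f_\e)$ plus lower-order terms, with $f_\e$ small in $L^2(D)$; the energy estimate then gives $\|\nabla w_\e\|_{L^2(D)}\le C\,\e^{1/2}\ln(1/\e)\,\|F\|_{L^2(D)}$ when $p'=2$, the factor $\e^{1/2}$ coming from the $\e$-wide boundary layer of the Dirichlet corrector and one $\ln(1/\e)$ from the weighted collar integrals of type $\int_{D_\e}d(x)^{-1}\,dx\approx\ln(1/\e)$ --- the same mechanism as (\ref{log}) in the proof of Theorem \ref{Thm-L-p} --- while a second $\ln(1/\e)$ enters through the smoothing operator $S_\e$ near $\Gamma$. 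Avellaneda--Lin's uniform-in-$\e$ boundary regularity for $\mathcal{L}_\e$ (nontangential maximal function and $W^{1,q}$ estimates up to $\Gamma$, \cite{AL-systems}) then transfers the interior bound on $\nabla w_\e$ to the $L^{p'}(\Gamma)$ bound on $\partial\phi_\e/\partial\nu_\e^*-\omega_\e^{T}\partial\phi_0/\partial\nu_0^*$; the endpoints $p'=2$ and $p'=\infty$ (the latter using $\phi_0\in C^{1,\alpha}(\overline D)$ for $F\in L^\infty$), together with the trivial $p'=1$, are handled directly and the remaining exponents by interpolation, the exponent $1/p=1-1/p'$ reflecting that the flux discrepancy is concentrated in the $\e$-neighbourhood of $\Gamma$. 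Collecting the pieces and taking the supremum over $F$ proves the theorem. The hard part is exactly this boundary flux-convergence estimate: it is where the uniform boundary regularity of $\mathcal{L}_\e$, the flux corrector, and sharp control of the $\e$-layer must all be combined, and where the two logarithmic factors enter.
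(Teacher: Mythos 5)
The paper does not prove this statement: Theorem~\ref{Thm-KLS} is imported verbatim from Kenig--Lin--Shen (Theorem~3.9 of \cite{KLS1}), and the only text the authors add is the remark that it holds under mild regularity hypotheses. There is therefore no internal proof for your sketch to be measured against.

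Judged on its own terms, your reconstruction is in the right shape. The duality reduction (pairing $u_\e-v_\e$ against an $L^{p'}$ source, passing to the adjoint Dirichlet problems $\mathcal{L}_\e^*\phi_\e=F$, $\mathcal{L}_0^*\phi_0=F$ with zero boundary data, and reading the difference off as a boundary flux integral), the appearance of $\omega_\e^{T}$ after moving the matrix across the inner product, and the observation that $\omega_\e$ in~(\ref{omega}) is exactly what the $\mathcal{L}_\e^*$-conormal derivative of the Dirichlet-corrector ansatz produces on $\Gamma$ once $\nabla\phi_0$ is reduced to $(\partial_n\phi_0)n$ and $h$ converts conormal to normal --- all of this is how the Kenig--Lin--Shen argument is organized. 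The use of the corrected difference $w_\e$ with an $\e$-scale mollifier $S_\e$, the flux (skew-symmetric) corrector to put the equation for $w_\e$ in divergence form, the $\e^{1/2}\ln(1/\e)$ energy bound, and Avellaneda--Lin uniform boundary regularity to pass from an interior $H^1$ bound to a boundary $L^{p'}$ flux bound are likewise the right ingredients.

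What your sketch does not do, and what actually constitutes the content of Theorem~3.9, is carry out the flux estimate. Both logarithms, the precise exponent $1/p=1-1/p'$, and the passage from $\|\nabla w_\e\|_{L^2(D)}$ to $\|\partial\phi_\e/\partial\nu_\e^*-\omega_\e^T\partial\phi_0/\partial\nu_0^*\|_{L^{p'}(\Gamma)}$ are asserted rather than proved; in \cite{KLS1} these occupy several sections and rest on a chain of intermediate results (uniform $W^{1,q}$ and nontangential-maximal-function estimates for the $\e$-problem, pointwise approximation of Green's and Poisson kernels, weighted estimates near $\Gamma$). Your interpolation endpoints are also slightly off: $p'=1$ (i.e.\ $p=\infty$) is outside the stated range and is not ``trivial,'' so the interpolation must be set up between an $L^2$ endpoint and a weak-type or $L^\infty$ endpoint inside $(1,\infty]$, not between $L^1$ and $L^\infty$. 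None of this is a conceptual error, but it does mean your text is a road map for the \cite{KLS1} proof rather than a proof; in the present paper that road map is replaced entirely by the citation.
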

We remark that this theorem is proved under some mild regularity conditions on the operator, domain and boundary data.

\noindent \textbf{Proof of Theorem \ref{Thm-L-p-oscillating}.} Under the condition of the theorem we have that
$\mathcal{L}_\e^* (P_\gamma^k)=0$ in $D$ from which we get that $\Phi_{\e, \gamma }^{* k} \equiv  P_\gamma^k  $
 where $1\leq \gamma \leq d$ and $1\leq k\leq N$. Using this and (\ref{omega}) we get
\begin{equation}\label{omega-2}
\omega_\e^{ij} (y ) = h_{ik}(y) n_\gamma(y) n_\gamma(y) n_\alpha(y) n_\beta(y) A_{k j}^{\alpha \beta}(y / \e) =
h_{ik}(y) n_\alpha(y) n_\beta(y) A_{k j}^{\alpha \beta}(y / \e),
\end{equation}
where the last equality is due to the fact that $n(y)n(y)=|n(y)|^2=1$ for all $y\in \Gamma$.
We now proceed to identification of the homogenized boundary data $g^*(x)$.
Recall that since we are working with the family $\mathcal{L}_\e$,
the coefficient matrix $A$ is now assumed to be 1-periodic. Set $c_m(A_{kj}^{\alpha \beta})$ to
be the $m$-th Fourier coefficient of $A_{kj}^{\alpha \beta}$. For the boundary vector-valued function $g(x,y)$ let
 $g_j$ be its $j$-th component, $1\leq j \leq N$, and set $c_m(g_j; x)$ to be the $m$-th
Fourier coefficient of the function $g_j(x,\cdot)$, where $x\in \Gamma$.

Now observe that by virtue of Theorem \ref{Thm-KLS} to get the homogenization of problem
(\ref{problem-form-osc}) it is enough to homogenize $v_\e$. Using (\ref{omega-2}) and Fourier expansion
of $A$ and $g(x,\cdot)$ for the boundary data of $v_\e$ we get
\begin{multline}\label{v-eps}
v_\e(y)= \omega_\e (y) g_\e(y) = h_{ik}(y) n_\alpha(y) n_\beta(y) A_{k j}^{\alpha \beta}(y / \e) g_j(y,y/\e) = \\
h_{ik}(y) n_\alpha(y) n_\beta(y)  \sum\limits_{m\in \Z^d} c_m ( A_{kj}^{\alpha \beta} ) c_{-m}(g_j; y) + \\
h_{ik}(y) n_\alpha(y) n_\beta(y)  \sum_{\substack{m,n\in \Z^d \\ m+n\neq 0}} c_m ( A_{kj}^{\alpha \beta} ) c_{n}(g_j; y) \Ex \left[ \frac{y}{\e} \cdot( m+n) \right].
\end{multline}
Due to the smoothness conditions on $A$ and $g$ their Fourier series converge absolutely,
hence rearrangements in (\ref{v-eps}) are correct. Set $g_i^*(y)$ to be the first term in the right hand side of  (\ref{v-eps}),
we claim that the homogenized boundary data is $g^*(x)=(g_i^*(x))_{i=1}^N$.
To see this define $u_0 $ as the solution to the following problem
$$
\mathcal{L}_0 u_0 (x) =0 , \ x \in D \qquad \text{ and } \qquad u_0(x) = g^* , \ x \in \Gamma.
$$
By the smoothness of the domain, operator and boundary data, the definition of $v_\e$ and $u_0$, it follows from the proof of Theorem \ref{Thm-L-p} that
$$
||v_\e - u_0||_{L^p(D)} \leq C_p \begin{cases}
(\e  |\ln \e |)^{1/p}, &\text{  $d = 3$ }, \\ \e^{1/p} ,&\text{  $d \geq 4$.}    \end{cases}
$$
This in combination with Theorem \ref{Thm-KLS} finishes the proof of our Theorem \ref{Thm-L-p-oscillating}
with homogenized boundary data $g^*$ defined explicitly in terms of operator, domain and boundary data $g$.   $\square$

\section{The Neumann problem}\label{sec-Neumann}

Throughout this section we will assume that $d\geq 3$ and the operator $\mathcal{L}$ is symmetric, i.e.
for its coefficients one has $A=A^*$ or in the explicit form, $A^{\alpha \beta }_{ij} \equiv A^{\beta \alpha}_{ji}$.

As another application of the proof of convergence result for the Dirichlet problem,
we consider homogenization of the Neumann problem, with
oscillating boundary data. Denote by $N(x,y)$ the matrix of Neumann functions
for operator $\mathcal{L}$ in the domain $D$ (see \cite{KLS3} for the definition).

For the operator $\mathcal{L}$ and for some function $F_{\e}:\R^d  \rightarrow \mathbb{C}^N$ consider
the following problem
\begin{equation}\label{problem-formulation-Neumann}
\begin{cases} \mathcal{L} u_{\e} (x)=F_{\e}(x) &\text{ in $D$}, \\
  \frac{\partial u_\e}{\partial \nu} (x)=g(x,x/\e)
&\text{ on $\Gamma$, } \end{cases}
\end{equation}
where $ \left( \frac{\partial u_{\e}}{\partial \nu} \right)^i (x) =n_{\alpha}(x) A_{ij}^{\alpha \beta}(x)
\frac{\partial u_{\e}^j }{\partial x_{\beta}  } $, $1\leq i \leq N$, denotes the conormal derivative, and $n(x)$ is
the outward unit normal to $\Gamma$ at the point $x$. Here for each $\e>0$ one chooses $F_{\e}$ so that the compatibility
condition $\int\limits_D F_\e(x) dx = \int\limits_{\Gamma} g(y,y / \e) d \sigma(y)$ holds true.
In addition we will also assume that $\sup\limits_{\e>0}\| F_\e \|_{\infty} <\infty$.

\begin{theorem} (Neumann Problem) \label{Thm-Neumann}
Let $d\geq 3$, and assume that conditions (i)-(iv) of Section \ref{sec-Assump} and the symmetry condition $A=A^*$ hold true.
Let $u_\e$ be a solution to the system (\ref{problem-formulation-Neumann})
and $u_0$ be a solution to the same
problem where the boundary value $g$ is replaced by $\overline{g}$, and $F_\e$ is replaced by some smooth function $F_0$ to fulfill the
compatibility condition. Set
$$
v_\e(x)=u_\e(x) -\frac{1}{ | \Gamma | } \int\limits_{\Gamma} u_{\e}(y) d\sigma(y) - \int\limits_{D} N(x,y) F_\e (y) dy,
$$
and let $v_0$ be the term corresponding to the homogenized problem. Then for any $1\leq p<\infty $ one has
$$
\|v_\e -v_0    \|_{L^p(D)} \leq  C_p \begin{cases} \e^{1/p} ,&\text{  $d=3$}, \\
\e^{3/2p}, &\text{  $d=4$}, \\
\e^{2/p} |\ln \e |^{1/p} , &\text{  $d\geq 5$}. \end{cases}
$$
\end{theorem}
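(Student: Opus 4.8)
\noindent\textbf{Proof plan for Theorem \ref{Thm-Neumann}.}

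The plan is to reduce $v_\e-v_0$ to a single surface integral against the matrix of Neumann functions and then rerun, almost verbatim, the oscillatory-integral analysis from the proof of Theorem \ref{Thm-L-p}. The only structural difference is that the Neumann function is one order less singular than the Poisson kernel, and it is exactly this gain of one power of $|x-y|$ that upgrades the exponents $\tfrac{1}{2p},\tfrac1p,\tfrac1p$ of Theorem \ref{Thm-L-p} to the sharper $\tfrac1p,\tfrac{3}{2p},\tfrac2p$ claimed here.

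First I would establish the representation
$$
v_\e(x)-v_0(x)=\int\limits_\Gamma N(x,y)\,\bigl[g(y,y/\e)-\overline{g}(y)\bigr]\,d\sigma(y),\qquad x\in D.
$$
The three subtractions in the definition of $v_\e$ are engineered precisely so that this holds: with the standard normalization of $N(x,y)$ (in particular $\int_\Gamma N(x,y)\,d\sigma(y)=0$, and $N$ symmetric because $A=A^*$), the volume potential $\int_D N(x,y)F_\e(y)\,dy$ accounts for the interior inhomogeneity and contributes only a constant conormal datum on $\Gamma$ (proportional to $\int_\Gamma g(y,y/\e)\,d\sigma(y)$ by the compatibility condition), while subtracting the boundary mean of $u_\e$ removes the remaining free additive constant; carrying out the same operations for $v_0$ with $\overline{g}$ and $F_0$ and subtracting, all $F$-dependent and constant terms cancel by the two compatibility conditions, leaving the displayed identity. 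Checking that this collapse is exact, with no residual boundary or interior terms, is the one genuinely new point, and is where I expect to have to be most careful about conventions.

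Next I would invoke the classical pointwise bounds for the Neumann function (see \cite{KLS3}); here $d\geq3$ is essential, since $N$ has only a logarithmic singularity when $d=2$. For every multi-index $\alpha$,
$$
|D_y^\alpha N(x,y)|\leq C_\alpha\,|x-y|^{-(d-2+|\alpha|)},\qquad x\in D,\ y\in\Gamma.
$$
Since $d-2<d-1=\dim\Gamma$, this already gives $\int_\Gamma|N(x,y)|\,d\sigma(y)\leq C$ uniformly in $x\in D$ (directly, with no distance estimate, unlike for the Poisson kernel), so $\sup_{\e>0}\|v_\e-v_0\|_{L^\infty(D)}<\infty$ by the representation. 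I would then work on $D_{\e^2}:=\{x\in D:\ d(x)\geq\e^2\}$ and dispose of the thin layer crudely,
$$
\int\limits_{D\setminus D_{\e^2}}|v_\e-v_0|\,dx\leq\|v_\e-v_0\|_{L^\infty(D)}\,\bigl|D\setminus D_{\e^2}\bigr|\leq C\e^2,
$$
which is below the asserted rate in every dimension $d\geq3$; this is why a layer of width $\e^2$, rather than $\e$ as for the Dirichlet problem, is used.

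On $D_{\e^2}$ I would rerun Steps 1--4 of the proof of Theorem \ref{Thm-L-p} with $P$ replaced by $N$: a partition of unity on $\Gamma$, reduction to local graphs, the Fourier expansion $g(y,y/\e)-\overline{g}(y)=\sum_{m\neq0}c_m(y)\,\Ex[\lambda F(y')]$ with $\lambda=|m|/\e$, and the same split into Case 1 ($|n'|\geq K_1\delta/2$) and Case 2 ($|n'|<K_1\delta/2$). In Case 1, two integrations by parts and the bound on $N$ produce $\lambda^{-2}|x-z^\ast|^{-d}$ in the worst term; since $d(x)\geq\e^2$ forces $|x-z^\ast|\geq\e^2$, one has $\int_{D_{\e^2}}|x-z^\ast|^{-d}\,dx\leq C|\ln\e|$, so after summing the Fourier modes via Lemma \ref{Lem-finite-sum-of-coeff} this case contributes $O(\e^2|\ln\e|)$. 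In Case 2 one localizes the stationary point with $h(\e^{-1/2}z')$: the inner piece is bounded, using $\int_{D_{\e^2}}|N(x,y)|\,dx\leq C$ and $\sum_{m\neq0}\int_{|z'|\leq2\e^{1/2}}|c_m|\,dz'\leq C\e^{(d-1)/2}$, by $O(\e^{(d-1)/2})$; the outer piece splits along the cones $\cC_j$, and two integrations by parts per piece give the Neumann analogues of the integrals $A_1,\dots,A_6$ with the exponent of $|x-z^\ast|$ lowered by one (hence at worst $|x-z^\ast|^{-d}$), so Fubini together with $\int_{D_{\e^2}}|x-z^\ast|^{-d}\,dx\leq C|\ln\e|$, $\int_{D_{\e^2}}|x-z^\ast|^{-(d-1)}\,dx\leq C$ and $\int_{D_{\e^2}}|x-z^\ast|^{-(d-2)}\,dx\leq C$ bounds it. Collecting all pieces yields $\|v_\e-v_0\|_{L^1(D)}\leq C\e$ for $d=3$ and $\leq C\e^{3/2}$ for $d=4$ (both dominated by the inner Case 2 term) and $\leq C\e^2|\ln\e|$ for $d\geq5$ (dominated by the worst $A$-integral and by Case 1), after which the stated estimate follows from $\|v_\e-v_0\|_{L^p(D)}\leq\|v_\e-v_0\|_{L^\infty(D)}^{1-1/p}\,\|v_\e-v_0\|_{L^1(D)}^{1/p}$. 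Apart from establishing the representation in the first step, which is the main obstacle, this is a routine, if lengthy, rerun of the proof of Theorem \ref{Thm-L-p} with $d-1$ replaced by $d-2$ in all kernel exponents.
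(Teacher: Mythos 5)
Your proposal is correct and takes essentially the same route as the paper: the identity you plan to establish is exactly the representation (\ref{neumann-repres}) cited from \cite{KLS1}, which immediately gives $v_\e(x)-v_0(x)=\int_\Gamma N(x,y)\,[g_\e(y)-\overline{g}(y)]\,d\sigma(y)$, and the paper then likewise reruns the proof of Theorem \ref{Thm-L-p} on $D_{\e^2}=\{x\in D:\ d(x)\geq\e^2\}$ with Lemma \ref{Lem-Poisson-est} replaced by Lemma \ref{Lem-Neumann}. Your explicit bookkeeping of the modified kernel exponents and the $A_1,\dots,A_6$ integrals reproduces the stated rates; you are simply spelling out what the paper dismisses as a ``simple modification.''
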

The reader may wonder about the behavior for the Neumann problem, versus Dirichlet above.
A better convergence rate in higher dimensions is a consequence of the fact that
Neumann kernel has lower order singularity in comparison to Poisson kernel.

The following is an example of problem \eqref{problem-formulation-Neumann}, for which the convergence rate of its solutions is determined by its boundary data.
\begin{example}\label{Exam-Neumann}
For each $\e>0$ take $F_\e=\frac{1}{|D|} \int\limits_{\Gamma} g (y,y/ \e) d \sigma(y) $, and $F_0=\frac{1}{|D|}  \int\limits_{\Gamma}
\overline{g}(y) d\sigma(y) $. Since $g$ is sufficiently smooth function, and $\Gamma$ is a smooth and uniformly convex hypersurface,
after expanding $g$ into its Fourier series with respect to the periodic variable, and applying the principle of stationary phase
(see \cite{Stein}, chapter VIII, Theorem 1) on each summand we get
$$
| F_{\e} - F_0 | \leq C \e^{(d-1)/2}.
$$
Using this and Lemma \ref{Lem-Neumann} below we obtain
$$
\left| \int\limits_{D} N(x,y) (F_\e -F_0) dy \right| \leq C \e^{(d-1)/2} \int\limits_{D} |N(x,y)| dy \leq C \e^{(d-1)/2},
$$
where $C$ is independent of $x\in D$, and $\e>0$. Combining this last estimate with Theorem \ref{Thm-Neumann},
for each $1\leq p<\infty$ we obtain
$$
\| u_\e -u_0 -\frac{1}{ | \Gamma | } \int\limits_{\Gamma} (u_{\e} -u_0) d\sigma(x) \|_{L^p(D)} \leq C_p \begin{cases}
\e^{1/p} ,&\text{  $d=3$}, \\
\e^{3/2p}, &\text{  $d=4$}, \\
\e^{2/p} |\ln \e |^{1/p} , &\text{  $d\geq 5$}. \end{cases}
$$

The example shows, that we will have the same picture, if we take some smooth and one periodic function $F(x)$, and proceed by
taking $F_\e(x)=F(x/ \e)$, and $F_0=\int\limits_{\mathbb{T}^d} F(x)dx$.
\end{example}

\begin{theorem} (Gradient of Neumann solutions) \label{Thm-Neumann-Grad}
Keeping the same conditions and notation of Theorem \ref{Thm-Neumann}, for each $1\leq p<\infty $,
and any $0<\kappa<1/p$ one has
$$
\|\nabla (v_{\e} -v_0) \|_{L^p (D)} \leq C_{p, \kappa} \e^{\kappa}.
$$
\end{theorem}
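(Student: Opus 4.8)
The plan is to get the gradient estimate by interpolating a small‑in‑$\e$ bound for $\nabla(v_\e-v_0)$ in $L^1(D)$ against a bound for it in $L^\infty(D)$ that loses only an arbitrarily small power of $\e$. Write $w_\e:=v_\e-v_0$. Using the definition of $v_\e,v_0$ together with the representation of Neumann solutions through the Neumann function $N$ (as in the proof of Theorem \ref{Thm-Neumann}), the volume terms $\int_D N(x,y)(F_\e-F_0)(y)\,dy$ and the boundary‑average terms cancel all but a pure additive constant, so that
$$
\nabla w_\e(x)=\int_\Gamma \nabla_x N(x,y)\,[g_\e(y)-\overline g(y)]\,d\sigma(y),\qquad x\in D,
$$
while $\mathcal{L}w_\e$ equals the constant $\frac1{|D|}\int_\Gamma(g_\e-\overline g)\,d\sigma=O(\e^{(d-1)/2})$ (by the principle of stationary phase, as in Example \ref{Exam-Neumann}). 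Also $\|w_\e\|_{L^\infty(D)}\le C\|g_\e-\overline g\|_\infty\le C$, since $\int_\Gamma|N(x,y)|\,d\sigma(y)\le C$ uniformly in $x$ (the same computation as for (\ref{P-int-is-unif-bdd}), now with the milder singularity $|x-y|^{2-d}$).

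\emph{Step 1: $\|\nabla w_\e\|_{L^1(D)}\le C\e|\ln\e|$.} Split $D=D_\e\cup(D\setminus D_\e)$ with $D_\e$ as in (\ref{DEF-D-eps}). On the strip use only the size bound $|\nabla_x N(x,y)|\le C|x-y|^{1-d}$: integrating over $\Gamma$ exactly as in the proof of (\ref{P-int-is-unif-bdd}) gives $\int_\Gamma|\nabla_x N(x,y)|\,d\sigma(y)\le C(1+\ln(1/d(x)))$, whence $|\nabla w_\e(x)|\le C(1+\ln(1/d(x)))$ and $\int_{D\setminus D_\e}|\nabla w_\e|\,dx\le C\int_0^\e(1+|\ln t|)\,dt\le C\e|\ln\e|$. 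On $D_\e$ I would run Steps 1--4 of the proof of Theorem \ref{Thm-L-p} verbatim, with $\nabla_x N(x,y)$ replacing the Poisson kernel $P(x,y)$; this is legitimate because $\nabla_x N$ and its $y$‑derivatives obey the same estimates $|D^\alpha_y\nabla_x N(x,y)|\le C_\alpha|x-y|^{1-d-|\alpha|}$ as $P$ (Lemma \ref{Lem-Poisson-est} together with the Neumann‑function estimates of Lemma \ref{Lem-Neumann}), and only these bounds enter the integrations by parts and the stationary‑phase analysis; the volume integrals used there ($\int_{D_\e}|x-z^{\ast}|^{-(d+1)}\,dx\le C\e^{-1}$ and the weaker ones) are unchanged. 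Since $d\ge 3$ this produces $\|\nabla w_\e\|_{L^1(D_\e)}\le C\e|\ln\e|$ for $d=3$ and $\le C\e$ for $d\ge 4$; adding the two contributions proves the claim.

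\emph{Step 2: $L^\infty$ bound with small loss, and conclusion.} By the Schauder estimates for the elliptic system $\mathcal{L}$ with the conormal (Neumann) boundary operator — which satisfies the complementing condition, and here $A,\Gamma,g$ are smooth — the function $w_\e$, which solves $\mathcal{L}w_\e=O(\e^{(d-1)/2})$ in $D$ with conormal datum $g_\e-\overline g$ on $\Gamma$, satisfies $\|w_\e\|_{C^{1,\alpha}(\overline D)}\le C_\alpha\bigl(\|g_\e-\overline g\|_{C^\alpha(\Gamma)}+\|w_\e\|_{L^\infty(D)}+\e\bigr)$ for each $\alpha\in(0,1)$. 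Since $\|g_\e-\overline g\|_{L^\infty}\le C$ and its first derivatives are $O(1/\e)$ (the $x/\e$ dependence), interpolation of Hölder seminorms gives $\|g_\e-\overline g\|_{C^\alpha(\Gamma)}\le C\e^{-\alpha}$, and therefore $\|\nabla w_\e\|_{L^\infty(D)}\le C_\alpha\e^{-\alpha}$ for every $\alpha>0$. Now for $1<p<\infty$,
$$
\|\nabla w_\e\|_{L^p(D)}\le\|\nabla w_\e\|_{L^\infty(D)}^{1-1/p}\,\|\nabla w_\e\|_{L^1(D)}^{1/p}\le C_{p,\alpha}\,\e^{\,1/p-\alpha(1-1/p)}\,|\ln\e|^{1/p}.
$$
Given $\kappa<1/p$, first pick $\alpha>0$ so small that $1/p-\alpha(1-1/p)>\kappa$; the factor $|\ln\e|^{1/p}$ is then absorbed at the cost of a slightly smaller exponent, giving $\|\nabla w_\e\|_{L^p(D)}\le C_{p,\kappa}\e^{\kappa}$. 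For $p=1$ the bound of Step 1 already reads $\e|\ln\e|\le C_\kappa\e^{\kappa}$ for every $\kappa<1$. This proves Theorem \ref{Thm-Neumann-Grad}.

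\emph{Main obstacle.} The only substantive point is the transfer, in Step 1, of the integration‑by‑parts/stationary‑phase analysis of Theorem \ref{Thm-L-p} to the kernel $\nabla_x N$; this rests entirely on having the derivative estimates $|D^\alpha_y\nabla_x N(x,y)|\le C_\alpha|x-y|^{1-d-|\alpha|}$ for the Neumann function — the gradient analogue of the Poisson‑kernel bounds of Lemma \ref{Lem-Poisson-est}, which is precisely what Lemma \ref{Lem-Neumann} supplies — after which every computation on $D_\e$ is identical to the one already carried out for $u_\e-u_0$. Everything else is a direct citation (Theorem \ref{Thm-Neumann}, Schauder theory for elliptic systems) or the elementary interpolation above; in particular, no uniform $L^\infty$ bound on $\nabla w_\e$ independent of $\e$ is needed, which is what makes the $\e^\kappa$‑with‑loss formulation natural here.
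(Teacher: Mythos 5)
Your argument is correct, and its core coincides with the paper's: both proofs rest on the observation that $v_\e-v_0=\int_\Gamma N(x,y)[g_\e-\overline g](y)\,d\sigma(y)$, that the estimate (\ref{Neumann2}) makes $\nabla_x N$ behave exactly like the Poisson kernel of Lemma \ref{Lem-Poisson-est}, and that Steps 1--4 of the proof of Theorem \ref{Thm-L-p} therefore transfer verbatim on $D_\e$ (only the size bounds on $y$-derivatives of the kernel enter the integrations by parts and the Fubini computations, so nothing changes). Where you diverge is in the two auxiliary steps. For the boundary strip, the paper invokes Lemma \ref{Lem-Neumann-sol-bdd}, i.e.\ the bound $|\nabla(v_\e-v_0)(x)|\leq C_\tau d(x)^{-\tau}$, yielding $C_\tau\e^{1-\tau}$ as in (\ref{neumann-grad-thm-est1}); you instead integrate $|\nabla_x N(x,y)|\leq C|x-y|^{1-d}$ over $\Gamma$ directly to get the slightly sharper logarithmic bound $C(1+\ln(1/d(x)))$ and hence $C\e|\ln\e|$ --- either suffices since the target exponent is $\kappa<1$. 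For the passage from $L^1$ to $L^p$, the paper stays inside its own toolkit: it interpolates between $L^1$ and $L^r$ via H\"older as in (\ref{neumann-grad-thm-est-Lp}), using that $d(x)^{-\tau}\in L^r(D)$ gives $\|\nabla(v_\e-v_0)\|_{L^r}\leq C_r$ uniformly in $\e$, and lets $r\to\infty$ so that $\alpha_r\to 1/p$; you interpolate between $L^1$ and $L^\infty$, obtaining $\|\nabla(v_\e-v_0)\|_{L^\infty}\leq C_\alpha\e^{-\alpha}$ from Schauder theory for the conormal problem together with $\|g_\e-\overline g\|_{C^\alpha(\Gamma)}\leq C\e^{-\alpha}$. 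Your route imports external machinery (Schauder estimates for elliptic systems with conormal boundary conditions, plus the identification of $\mathcal{L}(v_\e-v_0)$ as an $O(\e^{(d-1)/2})$ constant, which depends mildly on the normalization of the Neumann matrix in \cite{KLS3}); the paper's route is more self-contained, needing only its kernel estimates, but both interpolation schemes deliver exactly the same $\e^{\kappa}$, $\kappa<1/p$, conclusion.
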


For the proofs of Theorems \ref{Thm-Neumann} and \ref{Thm-Neumann-Grad} we need some preliminary estimate.
Recall that $N(x,y)$ denotes the matrix of Neumann functions for operator $\mathcal{L}$ in $D$ defined in \cite{KLS3}.
We have the following lemma.

\begin{lem}\label{Lem-Neumann}
Under the assumptions (i)-(iv) of Section \ref{sec-Assump}, symmetry condition $A=A^*$
and $d\geq 3$ for each $\alpha=(\alpha_1,...,\alpha_d) \in \Z^d_{+}$ there exists a constant
$C_{\alpha}$ such that for all $x\in D$ and $y\in \Gamma$ one has
\begin{equation}\label{Neumann1}
|D^{\alpha}_y N(x,y)| \leq C_{\alpha} \frac{1}{|x-y|^{d+|\alpha|-2}},
\end{equation}
and
\begin{equation}\label{Neumann2}
|D^{\alpha}_y \nabla_x N(x,y)| \leq C_{\alpha} \frac{1}{|x-y|^{d+|\alpha|-1}},
\end{equation}
where $|\alpha|=|\alpha_1|+...+|\alpha_d|$.
\end{lem}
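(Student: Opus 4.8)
The plan is to obtain \eqref{Neumann1} and \eqref{Neumann2} by bootstrapping the known pointwise size bounds for the Neumann matrix with boundary regularity estimates and a rescaling argument, in the same spirit as the Poisson kernel estimates quoted in Lemma~\ref{Lem-Poisson-est}. The starting point is the classical fact, recalled for the fixed operator $\mathcal{L}$ (with smooth coefficients and smooth uniformly convex boundary) from \cite{KLS3}, that for $d\ge 3$ there is a constant $C$ with
$$
|N(x,y)|\le \frac{C}{|x-y|^{d-2}},\qquad |\nabla_x N(x,y)|+|\nabla_y N(x,y)|\le \frac{C}{|x-y|^{d-1}},\qquad x,y\in\overline D,\ x\neq y .
$$
These already account for the cases $|\alpha|=0$ in \eqref{Neumann1} and \eqref{Neumann2}; the remaining cases $|\alpha|\ge 1$ are what we have to gain.

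The key observation is that, because of the symmetry hypothesis $A=A^*$ (so $\mathcal{L}^*=\mathcal{L}$) and the defining properties of the Neumann matrix, for each fixed $x\in D$ the map $y\mapsto N(x,y)$ solves a (transpose of the) homogeneous system $\mathcal{L}w=0$ in $D\setminus\{x\}$ with constant conormal derivative $-|\Gamma|^{-1}I$ on $\Gamma$; and since the parameter $x$ enters neither the $y$-equation nor the $y$-boundary condition, the same is true, with \emph{zero} conormal derivative, for the map $y\mapsto \nabla_x N(x,y)$. Thus both families are solutions of homogeneous Neumann problems on any half-ball $B(y_0,2\rho)\cap D$ that avoids the singularity $x$.

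To exploit this, fix $x\in D$ and $y\in\Gamma$ and set $\rho:=|x-y|/4$, which we may assume is smaller than a fixed $\rho_0>0$ (the complementary range $|x-y|\ge 4\rho_0$ is disposed of by a compactness argument, since there the claimed powers of $|x-y|$ are bounded from below and $N(x,\cdot)$ is smooth up to $\Gamma$ away from the diagonal, with bounds depending only on $D$, $\mathcal{L}$ and $\rho_0$). Rescaling around $y$ by $z\mapsto y+\rho z$ turns $w=N(x,\cdot)$ into a function $v$ on $B(0,2)\cap\rho^{-1}(D-y)$ solving an elliptic system with coefficients $A(y+\rho\,\cdot)$ that are uniformly elliptic and, for $\rho\le\rho_0\le1$, uniformly smooth, on a boundary piece which after rescaling is a smooth graph with geometry bounded independently of $x,y,\rho$; the conormal data of $v$ is $O(\rho)$. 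The Agmon--Douglis--Nirenberg boundary estimates for the Neumann problem, applicable by the smoothness assumption (iv), then give for every multi-index $\alpha$
$$
\|D^\alpha_z v\|_{L^\infty(B(0,1)\cap\rho^{-1}(D-y))}\le C_\alpha\Big(\|v\|_{L^\infty(B(0,2)\cap\rho^{-1}(D-y))}+\rho\Big).
$$
Since $|x-(y+\rho z)|\ge|x-y|-2\rho=2\rho$ for $|z|\le2$, the size estimate gives $\|v\|_{L^\infty}\le C(2\rho)^{2-d}$, and translating back (where $D^\alpha_y N(x,y)=\rho^{-|\alpha|}D^\alpha_z v$) yields
$$
|D^\alpha_y N(x,y)|\le C_\alpha\,\rho^{-|\alpha|}\big(\rho^{2-d}+\rho\big)\le \frac{C_\alpha}{|x-y|^{d-2+|\alpha|}},
$$
which is \eqref{Neumann1}. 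Repeating the argument verbatim with $w=\nabla_x N(x,\cdot)$ --- whose conormal data now vanishes and which satisfies $\|\nabla_x N(x,z)\|\le C(2\rho)^{1-d}$ on $B(y,2\rho)$ --- produces $|D^\alpha_y\nabla_x N(x,y)|\le C_\alpha|x-y|^{-(d-1+|\alpha|)}$, that is, \eqref{Neumann2}.

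I expect the real work to lie in the second and third paragraphs rather than in the final scaling computation: precisely justifying that $y\mapsto N(x,y)$ and $y\mapsto\nabla_x N(x,y)$ solve the stated homogeneous Neumann problems (the transpose/adjoint bookkeeping for systems, together with the local regularity of $N$ away from the diagonal needed to differentiate in the parameter $x$), and checking that the boundary Schauder estimates hold with constants uniform in the small parameter $\rho$ --- which is exactly where the smoothness and uniform convexity of $\Gamma$, and the fact that $A(y+\rho\,\cdot)$ only gets smoother as $\rho\to0$, enter. Once these structural points are in place, the decay rates follow by pure scaling as above.
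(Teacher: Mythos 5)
Your argument is essentially the paper's own proof, only written out in more detail: the paper likewise takes the cases $|\alpha|\le 1$ from \cite{KLS3} and obtains higher-order derivatives by a scaling argument combined with uniform up-to-the-boundary regularity for homogeneous Neumann problems, in analogy with the Poisson-kernel estimates of Lemma 2.1 in \cite{ASS}. Your rescaling-by-$\rho=|x-y|/4$ computation, the use of $A=A^*$ to view $N(x,\cdot)$ and $\nabla_x N(x,\cdot)$ as solutions with constant (respectively zero) conormal data, and the uniformity of the boundary estimates are exactly the ingredients the paper's sketch relies on, so the proposal is correct and matches the intended proof.
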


\begin{proof}
The case when $|\alpha| \leq 1$, under weaker conditions on operator and domain was treated in \cite{KLS3}.  The case of $|\alpha|=2$,
or even higher orders can be done
by a scaling argument along with up to boundary uniform regularity for solutions to Neumann problems; see  Lemma 2.1 in  \cite{ASS} for a
similar treatment for the Poisson kernel.
\end{proof}

An easy consequence of this lemma is the following bound on the gradients of $u_\e$.
\begin{lem}\label{Lem-Neumann-sol-bdd}
Let $u_{\e}$ be a solution to the problem (\ref{problem-formulation-Neumann}). Then for each $\kappa>0$ there exists a constant $C_{\kappa}$
independent of $\e$ such that
$$
|\nabla u_{\e}(x) |  \leq C_{\kappa} \frac{1}{ d^{\kappa}(x)}, \qquad \forall x\in D.
$$
\end{lem}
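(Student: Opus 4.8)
The plan is to combine the Neumann function representation of $u_\e$ with the pointwise bound on $\nabla_x N$ from Lemma~\ref{Lem-Neumann}. Up to an additive constant (which is annihilated by the gradient) one has
$$
u_\e(x) = \mathrm{const} + \int\limits_D N(x,y) F_\e(y)\, dy + \int\limits_\Gamma N(x,y)\, g(y,y/\e)\, d\sigma(y), \qquad x\in D,
$$
the precise signs being irrelevant for the argument. I would first dispose of the volume term: taking $\alpha=0$ in (\ref{Neumann2}) gives $|\nabla_x N(x,y)| \le C |x-y|^{1-d}$, and this function is integrable in $y$ over the bounded set $D$ uniformly in $x$, which both justifies differentiating under the integral sign and, combined with the standing assumption $\sup_{\e>0}\|F_\e\|_\infty<\infty$, shows that the $x$-gradient of the volume integral is bounded by a constant independent of $x$ and $\e$.

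The boundary term carries the distance dependence. Using once more $|\nabla_x N(x,y)| \le C|x-y|^{1-d}$ and the boundedness of $g$,
$$
\left| \nabla_x \int\limits_\Gamma N(x,y)\, g(y,y/\e)\, d\sigma(y) \right| \le C \|g\|_\infty \int\limits_\Gamma \frac{d\sigma(y)}{|x-y|^{d-1}},
$$
so it remains to bound $\int_\Gamma |x-y|^{1-d}\, d\sigma(y)$ uniformly in $\e$. This is done exactly as in the proof of the Claim establishing (\ref{P-int-is-unif-bdd}): fix $x\in D$, pick a closest boundary point $\xi\in\Gamma$, represent $\Gamma$ near $\xi$ as a graph $(y',\varphi(y'))$ with $\varphi(\xi)=0$ and $\nabla\varphi(\xi)=0$, and use convexity to get $|x-y|^2 \ge d(x)^2 + c|y'|^2$ on $\Gamma\cap B(\xi,\delta)$. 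The contribution of $\Gamma\setminus B(\xi,\delta)$ is trivially bounded, while in the tangential variable
$$
\int\limits_{|y'|\le\delta} \frac{dy'}{(d(x)^2 + c|y'|^2)^{(d-1)/2}} \le C\int\limits_0^{\delta} \frac{t^{d-2}\, dt}{(d(x)^2 + t^2)^{(d-1)/2}} \le C\bigl(1 + |\ln d(x)|\bigr),
$$
the last inequality following from the substitution $t=d(x)s$ and splitting the resulting integral at $s=1$, its tail behaving like $\int ds/s$.

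Combining the two pieces yields $|\nabla u_\e(x)| \le C\bigl(1 + |\ln d(x)|\bigr)$ with $C$ independent of $\e$, and since $|\ln t| \le C_\kappa t^{-\kappa}$ for every $\kappa>0$, this gives the asserted bound $|\nabla u_\e(x)| \le C_\kappa d^{-\kappa}(x)$. The only mildly technical ingredient is the surface-integral estimate, which is essentially a verbatim repetition of the computation already carried out for the Poisson kernel in the Claim above; everything else is routine. The gain of a mere logarithm of $d(x)$ (rather than a genuine power) reflects the weaker singularity $|x-y|^{1-d}$ of $\nabla_x N$ as compared with the $d(x)\,|x-y|^{-d}$ bound available for the Poisson kernel in the Dirichlet setting.
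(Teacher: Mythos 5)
Your proposal is correct and follows essentially the same route as the paper: the Neumann representation formula, the bound $|\nabla_x N(x,y)|\le C|x-y|^{1-d}$ from Lemma \ref{Lem-Neumann}, uniform boundedness of the volume term, and an estimate of the surface integral $\int_\Gamma |x-y|^{-(d-1)}\,d\sigma(y)$. The only (harmless) difference is the last step: the paper factors $|x-y|^{-(d-1)}\le d^{-\kappa}(x)\,|x-y|^{-(d-1-\kappa)}$ and invokes the uniform boundedness of the resulting subcritical surface integral, whereas your direct local-graph computation yields the slightly sharper bound $C\bigl(1+|\ln d(x)|\bigr)$, which indeed implies the stated $C_\kappa d^{-\kappa}(x)$.
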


\begin{proof}
The following representation is known (see \cite{KLS1}, Section 4)
\begin{equation}\label{neumann-repres}
u_\e(x) -\frac{1}{|\Gamma|} \int\limits_{\Gamma} u_\e(x) dx  = \int\limits_{D} N(x,y) F_{\e} (y) dy + \int\limits_{\Gamma} N(x,y) g_\e(y) d\sigma(y).
\end{equation}
Using the uniform boundedness of $F_\e$ and $g_\e$ with respect to $\e>0$, from (\ref{neumann-repres}) we obtain
\begin{equation}\label{neumann-grad}
|\nabla u_\e (x)| \leq C \int\limits_{D} |\nabla_x N(x,y)| dy + C\int\limits_{\Gamma} |\nabla_x N(x,y)|   d\sigma(y).
\end{equation}
The volume integral in (\ref{neumann-grad}) is bounded by virtue of estimate (\ref{Neumann2}) in Lemma \ref{Lem-Neumann}.
 For the surface integral, again the estimate (\ref{Neumann2}) of Lemma \ref{Lem-Neumann} gives
$$
\int\limits_{\Gamma} |\nabla_x N(x,y)|   d\sigma(y) \leq C \int\limits_{\Gamma} \frac{ d\sigma(y)}{|x-y|^{d-1}} \leq
\frac{C}{d^{\kappa}(x)} \int\limits_{\Gamma} \frac{ d\sigma(y)}{|x-y|^{d-1-\kappa}}.
$$
The last integral is uniformly bounded with respect to $x$ by some constant depending on $\kappa$, hence we obtain the result.
\end{proof}

\noindent \textbf{Proof of Theorem \ref{Thm-Neumann}.} In view of (\ref{neumann-repres}) we have
\begin{multline*}
v_\e(x)=u_{\e}(x)-\frac{1}{ |\Gamma| } \int\limits_{\Gamma} u_{\e}(y) d\sigma(y) -\int\limits_{D} N(x,y) F_\e(y) dy  =
\\ \int\limits_{\Gamma} N(x,y) g_{\e}(y) d \sigma(y).
\end{multline*}
By this, the proof of the theorem basically follows from the proof of Theorem \ref{Thm-L-p}, by simple modification.
Uniform boundedness of $v_{\e}(x)$ with respect to $x\in D$ and $\e>0$ now follows from the estimate of $|N(x,y)|$ provided by
Lemma \ref{Lem-Neumann}. Next, instead of $(\ref{DEF-D-eps})$ we consider $D_{\e}:=\{x\in D: \ d(x)\geq \e^2  \}$.
It is left to replace $P(x,y)$ by $N(x,y)$, and instead of Lemma $\ref{Lem-Poisson-est}$ use estimates of Lemma \ref{Lem-Neumann} in
the proof of Theorem \ref{Thm-L-p}. Note that a better convergence rate in comparison with the Dirichlet problem is due to lower
singularity of the kernel $N(x,y)$ than that of $P(x,y)$. $\square$

$\newline$

\noindent \textbf{Proof of Theorem \ref{Thm-Neumann-Grad}.} By the second part of Lemma \ref{Lem-Neumann} we see that the gradient
of the Neumann matrix with respect to $x$ variable, which is inside the domain, enjoys almost the same regularity properties as that of
the Poisson kernel provided by Lemma \ref{Lem-Poisson-est}. But this regularity is enough to repeat the steps of the proof of
Theorem \ref{Thm-L-p} up to Step 5, and to obtain the same estimates in the region $D_\e$ which is away from the boundary. To
complete the proof of the theorem for $p=1$ we need to prove the analogue of (\ref{est-Dirichlet-small-strip}). Here we use Lemma
\ref{Lem-Neumann-sol-bdd}. Keeping the same notations as in the proof of Theorem \ref{Thm-L-p} for each small $\tau>0$ we have
\begin{equation}\label{neumann-grad-thm-est1}
\int\limits_{D \setminus D_\e} |\nabla(v_\e-v_0)(x)|dx \leq C_{\tau} \int\limits_{D\setminus D_\e} \frac{dx}{d^{\tau}(x)} \leq C_{\tau} \int\limits_0^{\e} \frac{dr}{r^{\tau}}=C_{\tau} \e^{1-\tau},
\end{equation}
which proves the case $p=1$. Now for $1<p<\infty$, take $r>p$. By the H\"{o}lder's inequality we obtain
\begin{equation}\label{neumann-grad-thm-est-Lp}
\| \nabla(v_\e -v_0)\|_{L^p(D)} \leq \| \nabla (v_\e -v_0)\|_{L^1(D)}^{\alpha_r} \| \nabla( v_\e -v_0) \|_{L^r(D)}^{1-\alpha_r},
\end{equation}
where $1/p=\alpha_r+(1-\alpha_r)/r$, and $\alpha_r \in [0,1]$. From which we  conclude $$\alpha_r=\frac 1p \frac{r-p}{r-1}.$$
From Lemma \ref{Lem-Neumann-sol-bdd} we have $\| \nabla( v_\e -v_0) \|_{L^r(D)}^{1-\alpha_r} \leq C_{\tau}$, where $C_{\tau} $ depends on a
small parameter in the lemma. The bound for $L^p$-norm now follows form the case $p=1$ and the fact that $\lim\limits_{r \rightarrow \infty}
\alpha_r=1/p$. $\square$

\end{document}